\documentclass[a4paper]{amsart}
\usepackage{amsmath,amstext,amssymb,amsfonts,amscd,amsthm}
\usepackage[numbers,sort&compress]{natbib}
\usepackage{mathdots,mathrsfs,enumerate}
\usepackage{extarrows}
\usepackage{mathrsfs}  
\usepackage{dsfont}  
\usepackage{graphicx,color}
\usepackage{tikz}
\usepackage{stfloats}
\usetikzlibrary{3d,calc,patterns,graphs,arrows}
\usepackage{tikz-cd,array,diagbox}
\usepackage{changepage}
\usepackage{geometry}
\usepackage[CJKbookmarks=true,unicode,colorlinks,linkcolor=blue,anchorcolor=blue,citecolor=blue]{hyperref}
\usepackage{subfigure}
\numberwithin{equation}{section}

\usepackage{xcolor}
\definecolor{BeanPasteGreen}{RGB}{200,232,200}
\definecolor{WaterBlue}{RGB}{185,220,237}

\newtheorem{lemma}{Lemma}[section]
\newtheorem{theorem}{Theorem}[section]

\newtheorem{proposition}{Proposition}[section]

\newtheorem{property}{Property}[section]

\newtheorem{remark}{Remark}[section]

\newenvironment{proof*}
\numberwithin{equation}{section}

\newcommand{\op}[1]{\operatorname{#1}} 

\newcommand{\ds}{\displaystyle}

\newcommand{\p}{\partial}

\renewcommand{\t}{\triangle}

\newcommand{\ls}{\lesssim}

\newcommand{\sm}{\setminus}

\newcommand{\abs}[1]{\left\vert#1\right\vert}
\newcommand{\norm}[1]{\left\Vert#1\right\Vert}
\newcommand{\set}[1]{\left\{#1\right\}}

\newcommand{\frb}[1]{\left(#1\right)}

\newcommand{\ol}{\overline}

\newcommand{\wt}{\widetilde}

\newcommand{\Gra}{\Longrightarrow}

\newcommand{\Z}{\mathbb Z}

\newcommand{\R}{\mathbb R}

\renewcommand{\a}{\alpha}
\renewcommand{\b}{\beta}
\newcommand{\g}{\gamma}
\providecommand{\G}{}
\renewcommand{\G}{\Gamma}
\renewcommand{\d}{\delta}

\newcommand{\e}{\epsilon}
\newcommand{\ve}{\varepsilon}

\renewcommand{\th}{\theta}

\renewcommand{\k}{\kappa}

\renewcommand{\l}{\lambda}

\newcommand{\s}{\sigma}

\newcommand{\vp}{\varphi}

\newcommand{\Om}{\Omega}

\allowdisplaybreaks[3]

\begin{document}

\title[Interior Hessian estimates for Hessian quotient equations]
{Interior Hessian estimates for $k$-convex solutions of Hessian quotient equations}

\author{Zhisu Li}
\address{School of Mathematics and Center for Nonlinear Studies,
Northwest University, Xi'an, 710127, People's Republic of China}
\email{lizhisu@nwu.edu.cn}

\author{Ke Wu}
\address{School of Mathematics and Center for Nonlinear Studies,
Northwest University, Xi'an, 710127, People's Republic of China}
\email{wukemath@stumail.nwu.edu.cn}

\date{October 11, 2026.}

\begin{abstract}
We study interior Hessian estimates
for $k$-convex solutions of Hessian quotient equations $\frac{\s_k(D^2u)}{\s_\ell(D^2u)}=1$
with $2\leq k\leq n$, $0\leq\ell<k$, and $k-\ell\in\{1,2\}$.
We first adopt the unified quantitative separation-propagation method of Li--Wu \cite{LW26b}
to derive a doubling inequality,
and then use the compactness argument of Shankar--Yuan \cite{SY25}
to obtain the estimates.
This approach also applies to a class of general Hessian equations.
\end{abstract}

\maketitle
\tableofcontents

Keywords:
Hessian quotient equations,
interior Hessian estimates,
$k$-convexity,
separation-propagation method,
doubling inequalities

2020 Mathematics Subject Classification:
Primary:
35B45; 
Secondary:
35B65, 
35J60.  

\section{Introduction}\label{sec.introduction}

In this paper, we study interior Hessian estimates for the Hessian quotient equations
\begin{equation}\label{eqn.sigma-quotient}
F(D^2u):=\frac{\s_k(D^2u)}{\s_\ell(D^2u)}=1,
\end{equation}
where $2\leq k\leq n$, $0\leq\ell<k$, and $k-\ell\in\{1,2\}$.
Here $\s_j(D^2u):=\s_j(\l(D^2u))$ denotes the $j$-th
elementary symmetric function of the eigenvalue-vector $\l(D^2u)$ of $D^2u$,
with $\s_0(D^2u):=1$.

The natural elliptic branch for the $k$-Hessian operator is the G\aa rding cone
\[
\G_k:=\set{\l\in\R^n:\s_j(\l)>0\ \hbox{for all}\ 1\leq j\leq k}.
\]
The Hessian quotient operator $F$ is elliptic on $\G_k$.
In this setting, a function $u\in C^2(\Om)$ is called $k$-convex
if $\l(D^2u(x))\in\G_k$ for any $x\in\Om$.

\begin{theorem}\label{thm.Hessian-estimate}
Let $2\leq k\leq n$, $0\leq\ell<k$, and $k-\ell\in\{1,2\}$.
Suppose that $u\in C^\infty(B_1)$ is a $k$-convex solution of \eqref{eqn.sigma-quotient} in $B_1\subset\R^n$.
Then
\[
|D^2u(0)|\leq C\frb{n,k,\|u\|_{C^1(B_1)}}.
\]

From the interior gradient estimate for Hessian quotient equations by Chen \cite{C15},
we further obtain for $k$-convex solutions of \eqref{eqn.sigma-quotient} in $B_2$ that
\[
|D^2u(0)|
\leq C\frb{n,k,\norm{u}_{L^\infty(B_2)}}.
\]
\end{theorem}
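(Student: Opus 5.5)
Following the strategy announced in the abstract, the argument splits into a quantitative doubling inequality for the Hessian and a compactness argument in the spirit of Shankar--Yuan \cite{SY25}.

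\textbf{Step 1: doubling inequality.} We aim to show that for every $k$-convex solution $u$ of \eqref{eqn.sigma-quotient} in $B_1$ with $\|u\|_{C^1(B_1)}\le K$,
\[
\sup_{B_{1/2}}|D^2u|\le C(n,k,K)\Bigl(1+\sup_{B_{1/4}}|D^2u|\Bigr)^{C(n,k,K)}.
\]
To obtain it, we apply the separation-propagation method of \cite{LW26b}. Since $F$ is concave on $\G_k$ and, for $k-\ell\in\{1,2\}$, the level set $\{F=1\}$ has the convexity properties needed there, we take a Jacobi-type inequality for $b=\log\lambda_{\max}$, or a smooth symmetric substitute. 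The inequality should read
\[
F^{ij}b_{ij}\ge \varepsilon F^{ij}b_ib_j
\]
in the viscosity sense. The hypothesis $k-\ell\le 2$ is exactly what keeps $\varepsilon(n,k)>0$.

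The next task is to separate the region where $\lambda_{\max}$ is large from the region where $u$ behaves like a solution with a bounded Hessian. The separation rests on two facts: on $\G_k$ the equation forces $\sigma_1\ge c>0$, and the smallest eigenvalues are controlled when $\lambda_{\max}$ is large. We then propagate the bound from $B_{1/4}$ to $B_{1/2}$. For this, we test the Jacobi inequality against the Legendre-type or gradient-graph coordinates $x+Du(x)$, where the $C^1$ bound controls the geometry, and combine it with a mean-value or weak Harnack argument on $b$.

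\textbf{Step 2: compactness.} Suppose the theorem fails. Then there are solutions $u_m$ with $\|u_m\|_{C^1(B_1)}\le K$ and $|D^2u_m(0)|\to\infty$. We rescale around points of nearly maximal weighted Hessian. The doubling inequality lets us normalize so that the Hessian is comparable on balls of unit size while the blow-up parameter tends to infinity.

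Two cases follow. If the rescaled Hessians stay bounded, Evans--Krylov and the concavity of $F$ give $C^{2,\alpha}$ convergence to a solution whose Hessian is normalized. If instead the unrescaled functions converge uniformly to a $k$-convex viscosity solution $u_\infty$, the doubling inequality iterated down to small scales yields a polynomial bound
\[
\sup_{B_r}|D^2u_m|\le C r^{-N}.
\]
A Pogorelov-type or partial regularity argument then shows that $u_\infty$ is $C^{1,1}$ near $0$, and therefore smooth by Evans--Krylov. The smooth limit in turn gives uniform $C^{2,\alpha}$ closeness of $u_m$ via the Savin-type small perturbation theorem for uniformly elliptic concave equations. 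This contradicts $|D^2u_m(0)|\to\infty$.

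The hardest part will be ruling out singular limits, that is, showing that $u_\infty$ is regular. Our first attempt is to show that the singular set of $u_\infty$ (where the Hessian blows up) is closed and propagates along lines, as in Shankar--Yuan. A $k$-convex function with Lipschitz gradient control would then have to be affine along a segment, and this contradicts the equation, because $\sigma_k/\sigma_\ell=1$ is impossible with too many vanishing directions when $k\ge2$. The doubling inequality supplies exactly the quantitative input that prevents degenerate concentration.

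Finally, the $L^\infty$ version of the theorem follows by applying Chen's gradient estimate \cite{C15} on $B_{3/2}$ and then the first estimate on a rescaled ball.
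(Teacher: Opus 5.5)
Your Step~1 rests on a Jacobi inequality $F^{ij}b_{ij}\ge\varepsilon F^{ij}b_ib_j$ for $b=\log\lambda_{\max}$, which you assert but do not prove. It does not follow from concavity. At a point where $D^2u=\operatorname{diag}(\lambda_1,\dots,\lambda_n)$ with $\lambda_1=\dots=\lambda_m>\lambda_{m+1}$, concavity of $F^{1/(k-\ell)}$ only gives $F^{ii}u_{11ii}\ge2\sum_{i>m}\frac{F^{ii}-F^{11}}{\lambda_1-\lambda_i}u_{11i}^2$. This yields nothing against $-F^{11}u_{111}^2/\lambda_1^2$. For negative $\lambda_i$ the factor $2\lambda_1/(\lambda_1-\lambda_i)$ can drop below $1$ when $2k<n$, so those terms are not absorbed either. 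Jacobi inequalities of this kind have been obtained only under extra structure (convexity, semiconvexity, or $2k>n$ as in \cite{W26}). Also, $k-\ell\le2$ does not enter where you place it. Your ``separation'' of regions with large $\lambda_{\max}$ is not the separation of \cite{LW26b}, and the gradient-graph mean-value route would also need an integral bound you do not have.

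The missing idea is the comparison solution, which lets the paper avoid any Jacobi inequality. It solves $F(D^2v)=(1-\zeta/4)^{k-\ell}$ in $B_{3/2}$ with $v=u$ on $\partial B_{3/2}$. Then $v\ge u$, $L_v(v-u)\le0$, and $L_u(v-u)\ge0$ off $\operatorname{supp}\zeta$. A seed gap $v-u\ge\varepsilon_0$ near $0$ comes from the divergence-free Newton tensors. This gap is propagated to $B_{1/2}$ with Korevaar-type barriers $\psi_y$ satisfying $L_v\psi_y>0$. That is where $k-\ell\in\{1,2\}$ is used, through $F_j(1+\lambda_j^2)\ge c_1\sum_iF_i$ in Lemma~\ref{lem.Fj-lambdaj}. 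With $w=v-\delta$, the paper then applies the maximum principle to $\Phi=(w-u)^2e^{\tau|Du|^2/2}u_{\xi\xi}$. The bad third-order terms are converted, via the first-order condition, into $\frac{C|D(w-u)|^2}{((w-u)\lambda_1)^2}\sum_iF^{ii}\lambda_i^2$. This gives $(w-u)\lambda_1\le C$ at a maximum outside $B_{1/4}$, hence the linear doubling $\sup_{B_{1/2}}\lambda_{\max}\le C\sup_{B_{1/4}}\lambda_{\max}$.

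Your Step~2 does not work as written either. A doubling inequality bounds the supremum on a larger ball by that on a smaller one. Iterating it therefore cannot give $\sup_{B_r}|D^2u_m|\le Cr^{-N}$ uniformly in $m$, and that bound is false anyway since $|D^2u_m(0)|\to\infty$. Your first case (bounded rescaled Hessians) yields no contradiction. Showing $u_\infty\in C^{1,1}$ near $0$ via a propagating singular set is essentially the theorem itself, and you give no argument for it.

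You never need regularity at $0$, only one good point nearby. By Alexandrov-type differentiability of $k$-convex viscosity limits (\cite[Lemma 5.1]{W26}, or \cite[Proposition 4.1]{SY25} when $\ell=0$), there is $y\in B_{1/8}$ with $u_\infty(y+z)=P(y+z)+o(|z|^2)$, $\lambda(D^2P)\in\Gamma_k$, and $F(D^2P)=1$. Savin's small perturbation theorem \cite{Sav07} then gives $\sup_{B_{\rho/2}(y)}\lambda_{\max}(D^2u_m)\le C_0$ for all large $m$. Finitely many applications of the rescaled doubling inequality along a chain of balls from $y$ to $0$ bound $\lambda_{\max}(D^2u_m(0))$ uniformly. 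Together with $|D^2u_m|\le n\lambda_{\max}(D^2u_m)$, this contradicts $|D^2u_m(0)|\to\infty$.
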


\begin{remark}
The restriction $k-\ell\in\{1,2\}$ is sharp
for interior Hessian estimates controlled by the $C^1$ norm,
even for smooth convex solutions with constant right-hand side.
For $\ell=0$ and $k\geq3$,
this follows from the counterexamples of Pogorelov \cite{P78} and Urbas \cite{U90}.
For $\ell\geq1$ and $k-\ell\geq3$,
such counterexamples can be found in \cite[Section 7]{Lu25}.
\end{remark}

\begin{remark}
Theorem \ref{thm.Hessian-estimate} extends to a class of general Hessian equations
by the same argument.
See Section \ref{sec.general-Hessian} for details.
\end{remark}

\begin{remark}
The second author \cite[Theorem  1.1]{W26} established Theorem \ref{thm.Hessian-estimate}
for $1\leq\ell<k<n$, $k-\ell\in\set{1,2}$ and $2k>n$
by combining a concavity inequality and a pointwise doubling argument
with the compactness argument of Shankar--Yuan \cite{SY25}.
\end{remark}

\begin{remark}
Qiu--Yan \cite[Theorem  1.1]{QY26b} recently
established the first estimate in Theorem \ref{thm.Hessian-estimate}
by adapting the quantitative separation-propagation method
of the authors \cite{LW26b}.
\end{remark}

The study of interior Hessian estimates for Hessian equations has seen many advances.

For the $k$-Hessian equation $\s_k(D^2u)=f$,
Heinz \cite{H59} proved the classical estimate for the two-dimensional
Monge--Amp\`ere equation; see also \cite{CHO16,Liu21}.
Pogorelov \cite{P78} constructed singular Monge--Amp\`ere solutions in dimensions $n\geq3$,
and Urbas \cite{U90} obtained analogous examples
for $k$-Hessian equations with $k\geq3$.
For the quadratic equation $\s_2(D^2u)=1$,
Warren--Yuan \cite{WY09} established interior Hessian estimates in dimension three,
and Qiu \cite{Q24} treated positive variable right-hand sides.
The four-dimensional Hessian estimate for $\s_2(D^2u)=1$
was subsequently established by Shankar--Yuan \cite{SY25}.
They also obtained estimates in higher dimensions under dynamic semiconvexity,
while Fan \cite{Fan26} extended these results to positive variable right-hand sides.
Earlier estimates in arbitrary dimensions were obtained
under almost convexity \cite{MSY19}, semiconvexity \cite{SY20},
or the condition $\s_3(D^2u)\geq-A$ \cite{GQ19}.
Recently, the authors \cite{LW26b} established interior Hessian estimates
for $\s_2(D^2u)=1$ in arbitrary dimensions under $2$-convexity alone.

For Hessian quotient equations,
the interior estimates for $\s_3/\s_1=1$ in dimensions three and four
follow from the special Lagrangian structure \cite{CWY09,WY14,Z24},
while Lu \cite{Lu23} gave a proof based on a Jacobi inequality in dimension three.
Lu \cite{Lu25} established Hessian estimates for $\s_n/\s_\ell=f(x)$ with $n-\ell\in\set{1,2}$,
and constructed counterexamples for convex solutions when $k-\ell\geq3$.
Jiao--Sui \cite{JS26} obtained estimates for $\s_2/\s_1$
under $2$-convexity in dimension three and semiconvexity in higher dimensions,
while Mei--Yan \cite{MY26} treated semiconvex solutions
of $\s_3/\s_\ell=1$ for $\ell=1,2$ in arbitrary dimensions.
The authors \cite{LW26b} also treated $\s_2/\s_1=1$
by reducing it to the quadratic Hessian equation through a quadratic shift.
For general quotients with $k-\ell\in\{1,2\}$,
Lu--Tsai \cite{LT26a} obtained interior Hessian estimates for convex solutions
under a structural concavity assumption.
This assumption was removed for adjacent quotients by Tsai \cite{T26}
and for both cases by the authors \cite{LW26a}.
Dong--Zhang \cite{DZ26} extended these estimates
to admissible semiconvex solutions with positive $C^2$ right-hand sides.
For constant right-hand sides,
the second author \cite{W26} obtained estimates under $k$-convexity when $2k>n$.
Around the same time,
Qiu--Yan \cite{QY26b} established interior Hessian estimates
for the equations considered here
under $k$-convexity in arbitrary dimensions.

\medskip

Three related approaches to interior Hessian estimates
provide the background for our proof.

The first is the integral method of Warren--Yuan \cite{WY09}.
It combines a Jacobi inequality with a mean-value inequality on the gradient graph
to reduce the pointwise Hessian bound
to a weighted integral,
which is then controlled by integration by parts.
Qiu \cite{Q24} extended this method to variable right-hand sides
and refined the argument to avoid the Sobolev inequality
by incorporating a maximum-principle argument.
For semiconvex solutions of the $\s_2$ equation,
Shankar--Yuan \cite{SY20} used a quadratic shift
and a Legendre--Lewy transform,
building on Chang--Yuan's observation \cite{CY10}
that the transformed linearized operator is uniformly elliptic
after normalization.
Their transformation rule transfers the subharmonicity
provided by the Jacobi inequality to the new variables,
where a local maximum principle yields the mean-value estimate.
Lu \cite{Lu23,Lu25} developed this approach for Hessian quotient equations.
In \cite{Lu25}, he used the concavity inequalities of Guan--Sroka \cite{GS26}
to control the third-order terms
and derive a Jacobi inequality for $\log\l_{\max}(D^2u)$.
A Legendre transform then gives the mean-value estimate,
while an induction argument controls the integrals
generated by repeated integration by parts.
This approach was further developed
for convex solutions in \cite{LT26a,T26,LW26a}
and admissible semiconvex solutions in \cite{DZ26}.

The second is the compactness method of Shankar--Yuan \cite{SY25}.
It combines an almost Jacobi inequality,
a doubling inequality extending Qiu's method \cite{Q24},
an Alexandrov-type differentiability result adapted from \cite{CT05},
and Savin's small perturbation theorem \cite{Sav07}.
Shankar \cite{S26} applied this strategy to the special Lagrangian equation
by combining the radial-derivative auxiliary function $x\cdot Du-u$
of Guan--Qiu \cite{GQ19}
with Korevaar-type exponential cutoffs \cite{K87}.
Fung \cite{F26a} adapted this construction to convex solutions
of Hessian quotient equations with gradient-dependent right-hand sides
under suitable structural assumptions.
The second author \cite{W26} derived a Jacobi inequality
from a concavity inequality for Hessian quotients,
and used barriers adapted from \cite{LW26b}
to establish the doubling inequality required by this method.

\vspace{2.5pt}

The third is the quantitative separation-propagation method of the authors \cite{LW26b}.
It first constructs a comparison solution $v$
of a related nonlinear equation
with boundary data $u$,
then establishes a uniform positive lower bound for $v-u$ on a small ball,
propagates this bound to a larger ball,
and finally applies the Chou--Wang Pogorelov estimate \cite{CW01}
on a suitable superlevel set of $v-u$.
It is worth mentioning that
Mooney \cite{M21} used Chou--Wang's estimate to obtain interior Hessian estimates
for convex viscosity solutions of $\s_2(D^2u)=1$,
and later Mooney \cite{M25} also emphasized its potential for the regularity of solutions.
Nevertheless,
prior to \cite{LW26b},
in view of the counterexamples
to interior Hessian estimates for $k$-Hessian equations with $k\geq3$
and for Hessian quotient equations with $k-\ell\geq3$,
it remained unclear whether such estimates
for the quadratic Hessian equation and quotients with $k-\ell\in\{1,2\}$
held in higher dimensions under admissibility alone,
without additional convexity or semiconvexity.
In particular, for $\s_2(D^2u)=1$ in dimensions $n\geq5$,
the search for singular viscosity solutions and the study of finer partial regularity
were also viewed as possible research directions.
In this context,
the authors \cite{LW26b} used this separation-propagation method
to give the first affirmative answer regarding interior Hessian estimates
for $2$-Hessian (quotient) equations $\s_2(D^2u)=\frac{\s_2}{\s_0}(D^2u)=1$ and $\frac{\s_2}{\s_1}(D^2u)=1$
in arbitrary dimensions under natural $2$-convexity alone,
and this method was soon adapted
to obtain interior estimates for several related equations.
Specifically,
Chen--Zhou--Zhu \cite{CZZ26} considered the $2$-Hessian equation with positive $C^\a$ right-hand sides;
Qiu--Yan \cite{QY26a} studied the constant graphical scalar curvature equation
by combining a distance Jacobi inequality with a two-surface maximum-principle argument;
and Han--Ma \cite{HM26} further treated
graphical curvature quotients $\s_k(\k)/\s_{k-2}(\k)=1$ with $3\leq k<n$
by deriving a quantitative compression inequality.
For the adjacent Hessian quotient $F=\s_k/\s_{k-1}$,
Qiu--Yan \cite{QY26b} followed the same strategy with a different comparison solution.
They solve
\[
\s_k(D^2v)-\s_{k-1}(D^2v)=-\ve
\ \text{in}\ B_{7/8},
\quad
v=u
\ \text{on}\ \p B_{7/8}.
\]
The Newton-tensor identities, an interior gradient bound,
and Qiu's doubling argument \cite{Q24}
give $v-u\geq\d_0>0$ in $B_{1/2}$.
The concavity of $F$ gives a directional Jacobi-type inequality
for upper test functions touching $\log\l_{\max}$.
Together with the exponential factor involving $|Du|^2$
and the bounds
\[
\sum_{i,j=1}^n\frac{\p F}{\p u_{ij}}(D^2u)(v-u)_{ij}\geq-\frac{k\ve}{\s_{k-1}(D^2u)},
\quad
\s_{k-1}(D^2u)\geq c(n,k)\l_{\max},
\]
they obtain the Pogorelov estimate
\[
(v-u-\d_0/2)^4\l_{\max}\leq C\frb{n,k,\|u\|_{C^1(B_1)}}
\quad\text{in}\
\set{x\in B_{7/8}:v-u>\d_0/2}.
\]
The uniform separation on $B_{1/2}$ then gives the interior Hessian bound.

\medskip

In this paper, we give an alternative proof
by deriving a doubling inequality for $\l_{\max}$ through quantitative separation-propagation
and then applying the compactness argument in \cite{SY25,W26},
based on Alexandrov-type differentiability \cite{CT05,W26}
and Savin's small perturbation theorem \cite{Sav07}.
As observed in \cite{LW26b},
separation propagation can be viewed as doubling for $(v-u)^{-1}$.
Here the propagated separation enables us to use $v-u-\d$
as a cutoff factor in an auxiliary function
of the form employed in Pogorelov estimates;
the corresponding maximum-principle calculation
then gives a new proof of the doubling inequality,
without establishing a Jacobi inequality for $\log\l_{\max}(D^2u)$.
To this end, we begin by solving the following Dirichlet problem for the original quotient operator,
with only the right-hand side modified:
\[
F(D^2v)=(1-\zeta/4)^{k-\ell}\ \text{in}\ B_{3/2},
\quad
v=u\ \text{on}\ \p B_{3/2},
\]
where $\zeta$ is a smooth cutoff function supported in $B_{1/4}$
with $0\leq\zeta\leq1$ and $\zeta=1$ in $B_\rho$ for $\rho:=1/32$.
The existence of $v$ follows from \cite{CNS85,Tru95},
and the comparison principle gives
\[
v-u\geq0\quad\text{in}\ B_{3/2}.
\]

Next, we obtain a quantitative lower bound for $v-u$.
To see this, consider
\[
\wt F^{ij}
:=\s_\ell(D^2v)\frac{\p F}{\p v_{ij}}(D^2v)
=\frac{\p\s_k}{\p v_{ij}}(D^2v)-F\frac{\p\s_\ell}{\p v_{ij}}(D^2v).
\]
In $B_\rho$, the matrix $(\wt F^{ij})$ is a constant linear combination of divergence-free Newton tensors,
and its trace is bounded by $C\s_{k-1}(D^2v)$.
The concavity of $F^{1/(k-\ell)}$ gives
\[
L_v(v-u)\leq-c(n,k)<0,
\]
where $L_v:=\frac{\p F}{\p v_{ij}}(D^2v)\p_{ij}$ denotes the linearized operator of $F$ at $v$.
Multiplying by $\s_\ell(D^2v)$ and integrating by parts twice against a cutoff give
\[
1\ls\int_{B_1}(v-u)\s_{k-1}(D^2v).
\]
The Newton-tensor identities and the gradient estimate then yield the seed separation
\[
v-u\geq\ve_0>0
\quad\text{on a small ball contained in}\ B_{1/2}.
\]
To propagate this gap,
for each $y\in B_{1/2}$,
we introduce the auxiliary functions as in \cite{LW26b},
\begin{gather*}
\vp_y(x)
:=(x-y)\cdot Dv(x)-v(x)+v(y)
+\frac{\a}{2}|x-y|^2-\b|Dv(x)|^2,
\\
\psi_y
:=e^{(c_y-\vp_y)/\g}-1,
\end{gather*}
where $\vp_y$ is adapted from the test functions in \cite{S26,GQ19,Q24}
and $\psi_y$ from the Korevaar-type exponential cutoffs in \cite{K87,S26}.
Here the parameters are chosen so that the component $\Om_y$
of $\set{\vp_y<c_y}$ containing $B_{2r}(y)$ satisfies
\[
B_{2r}(y)\subset\Om_y\Subset B_{4r}(y),
\quad
L_v\psi_y>0\geq L_v(v-u)
\quad\text{in}\ \Om_y\sm\ol{B_r(y)}.
\]
Therefore, the maximum principle gives
\[
v-u\geq\e\ \text{in}\ B_r(y)
\ \Gra\
v-u\geq\th\e\ \text{in}\ B_{2r}(y),
\]
where $\th\in(0,1)$ is a uniform constant.
Iteration along overlapping balls gives
\[
v-u\geq2\d\quad\text{in}\ B_{1/2}.
\]
Moreover, using the global H\"older estimate for harmonic functions,
we also obtain the boundary decay estimate $v-u\leq C\op{dist}(x,\p B_{3/2})^{1/2}$.
Thus, setting $w:=v-\d$,
the component $\Om$ of $\set{w>u}$ containing $B_{1/2}$ satisfies
\[
B_{1/2}\subset\Om\Subset B_{3/2},
\quad
w=u\ \text{on}\ \p\Om,
\quad
w-u\geq\d\ \text{in}\ B_{1/2}.
\]

Following this construction for the quadratic Hessian equation,
one can complete the proof
by applying the Chou--Wang Pogorelov estimate $(w-u)^\a|D^2u|\leq C$ on $\Om$.
However, for Hessian quotient equations, such an estimate fails in general.
As explained in \cite[page 3]{LT26b},
the quantities $\sum_{i=1}^nF^{ii}$ and $\sum_{i=1}^nF^{ii}\l_i^2$
do not generate a positive term growing with $\l_{\max}$
that can absorb the cutoff errors.
The counterexample in \cite[Proposition 9.2]{QY26b}
rules out a general fixed-level comparison-gap Pogorelov estimate
controlled only by the domain diameter and the $C^1$ norms.
Nevertheless,
we may apply the maximum principle to the auxiliary function
\[
\Phi(x,\xi)
:=(w-u)^2(x)\exp\frb{\frac\tau2|Du(x)|^2}u_{\xi\xi}(x)
\]
with $\tau>0$ sufficiently small
to obtain the doubling inequality
\begin{equation}\label{eqn.doubling1/2}
\sup_{B_{1/2}}\l_{\max}(D^2u)
\leq C\frb{n,k,\|u\|_{C^1(B_2)}}\sup_{B_{1/4}}\l_{\max}(D^2u).
\end{equation}
Since $\zeta$ vanishes outside $B_{1/4}$,
the concavity of $F^{1/(k-\ell)}$ gives
\[
L_u(w-u)
=L_u(v-u)
\geq-\frac{k-\ell}{4}\zeta
=0
\quad\text{in}\ \Om\sm\op{supp}\zeta.
\]
At a maximum point $(x_0,\xi_0)$ with $x_0\notin\ol{B_{1/4}}$,
we use this nonnegativity and $\G_k\subseteq\G_2$ to get
\[
0\geq
\frb{\frac\tau2-\frac{C}{\frb{(w-u)\l_{\max}}^2}}
\sum_{i=1}^nF^{ii}\l_i^2.
\]
Dividing by the positive factor $\sum_{i=1}^nF^{ii}\l_i^2$,
without requiring it to grow with $\l_{\max}$, we obtain
\[
(w-u)(x_0)\l_{\max}(D^2u(x_0))
\leq C\frb{n,k,\|u\|_{C^1(B_{3/2})}}.
\]
Combining this estimate with the bound from $B_{1/4}$
and using $w-u\geq\d$ in $B_{1/2}$
proves \eqref{eqn.doubling1/2}.

Finally, we conclude by applying the compactness argument
in \cite[Section 5]{W26} with \eqref{eqn.doubling1/2}.

\medskip

\section{Preliminaries}\label{sec.preliminaries}

In this section,
we fix notation and recall several standard facts
about the Hessian quotient equation
that will be used throughout the paper.

Let $\l=(\l_1,\l_2,\dots,\l_n)\in\R^n$.
For $i\in\Z_{[1,n]}$,
denote by $\l|i$ the vector obtained from $\l$ by deleting the $i$-th component.
By convention, $\s_i=0$ if $i<0$ or $i>n$, and $\s_0=1$.
For the Hessian quotient $F=\frac{\s_k}{\s_\ell}$
with $0\leq\ell<k\leq n$,
its normalized operator $F^{1/(k-\ell)}$ is positive, elliptic,
concave and homogeneous of degree one on $\G_k$; see \cite{CNS85}.
We write
\[
F_i:=\frac{\p F}{\p\l_i}
\quad\text{and}\quad
F_{ij}:=\frac{\p^2F}{\p\l_i\p\l_j}.
\]
For any $n\times n$ real symmetric matrix $M$ with its eigenvalue-vector $\l(M)\in\G_k$,
we write $F(M):=F(\l(M))$ for short, and then define
\[
F^{ij}(M)
:=\frac{\p F(M)}{\p M_{ij}},
\quad
F^{pq,rs}(M)
:=\frac{\p^2F(M)}{\p M_{pq}\p M_{rs}}.
\]
At a point where $D^2u$ is diagonal
with eigenvalues $\l_1,\l_2,\dots,\l_n$,
we have
\[
F^{ij}=F_i\d_{ij}\quad\text{and}\quad F^{ii,jj}=F_{ij}.
\]
For a smooth $k$-convex solution $u$ of $F(D^2u)=f$, we call
\[
L_u:=F^{ij}(D^2u)\p_{ij}
\]
the linearized operator of $F$ at $u$.
Note that $L_uu=F^{ij}(D^2u)u_{ij}=(k-\ell)F$.

\medskip

Now, we introduce some basic algebraic inequalities for $F$.

\begin{property}\label{prop.inequality}
Let $0\leq \ell<k\leq n$ and $\l\in\G_k$. Then
\begin{enumerate}[\qquad\quad(1)]
\item
$\ds F_i(\l)
=\frac{\s_{k-1}(\l|i)}{\s_\ell(\l)}
-\frac{\s_k(\l)\s_{\ell-1}(\l|i)}{\s_\ell(\l)^2}
>0$,

\item
$\ds\sum_{i=1}^nF_i
=(n-k+1)\frac{\s_{k-1}(\l)}{\s_\ell(\l)}
-(n-\ell+1)
\frac{\s_k(\l)\s_{\ell-1}(\l)}{\s_\ell(\l)^2}$.

\item
$\ds\sum_{i=1}^n\l_iF_i=(k-\ell)F$,
$\ds\sum_{i=1}^n\l_iF_{ij}=(k-\ell-1)F_j$ for any $j\in\Z_{[1,n]}$.
\end{enumerate}
\end{property}

\medskip

The following lower bounds for the linearized coefficients of $F$
will be used to construct a subsolution (Lemma \ref{lem.barrier})
and subsequently to prove the propagation lemma
(Lemma \ref{lem.propagation}).

\begin{lemma}\label{lem.Fj-lambdaj}
Let $2\leq k\leq n$, $0\leq\ell<k$, $k-\ell\in\set{1,2}$.
Let $\l\in\G_k$ satisfy $3/4\leq F^{1/(k-\ell)}(\l)\leq1$.
Then
\begin{equation}\label{eqn.Fj-lambdaj>}
F_j(\l)(1+\l_j^2)
\geq c_1(n,k)\sum_{i=1}^nF_i(\l)
\quad\text{for any}\ j\in\Z_{[1,n]}.
\end{equation}
Moreover, if $\l_j<0$, then
\begin{equation}\label{eqn.Fj>}
F_j(\l)\geq c_2(n,k)\sum_{i=1}^nF_i(\l),
\end{equation}
where $c_1=c_1(n,k)$ and $c_2=c_2(n,k)$
are positive constants.
\end{lemma}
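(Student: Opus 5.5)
Order the eigenvalues $\l_1\geq\l_2\geq\dots\geq\l_n$. Then $F_1\leq F_2\leq\dots\leq F_n$, a standard fact for $\s_k/\s_\ell$ on $\G_k$. Hence $\sum_iF_i\leq nF_n$, and every index $j$ with $F_j\geq cF_n$ satisfies \eqref{eqn.Fj-lambdaj>} and \eqref{eqn.Fj>} trivially. The work is to compare $F_j$ with $F_n$ for the large eigenvalues, and to show that $\l_j^2$ compensates when $F_j$ is small.

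\emph{Step 1 (bound on $\l_j$ where $F_j$ is small).} The normalization $3/4\leq F^{1/(k-\ell)}\leq1$ and the Maclaurin-type inequalities on $\G_k$ give $F\sim1$. We also have $\s_{k-1}(\l)\sim\l_1\cdots\l_{k-1}$ up to constants $c(n,k)$ (standard on $\G_k$, since $\l_1\geq\dots\geq\l_{k-1}>0$ and $|\l_i|\leq C\l_{k-1}$ for $i\geq k$). Using Property \ref{prop.inequality}(1) and the identity $\s_{k-1}(\l|j)\l_j+\s_k(\l|j)=\s_k(\l)$, I would write
\[
F_j=\frac{\s_{k-1}(\l|j)\s_\ell(\l)-\s_k(\l)\s_{\ell-1}(\l|j)}{\s_\ell^2}
=\frac{\s_{k-1}(\l|j)\s_\ell(\l|j)-\s_k(\l|j)\s_{\ell-1}(\l|j)}{\s_\ell^2},
\]
which is the classical expression showing that $F_j$ is a positive ``Newton-type'' quantity independent of $\l_j$ in the numerator. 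For $j\geq k$ I expect $F_j\gtrsim F_n$ directly. For $j\leq k-1$ I expect $F_j\sim F_n\cdot\l_n'/\l_j$ type behaviour, where the ratio is governed by which eigenvalues are removed.

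\emph{Step 2 (negative $\l_j$).} If $\l_j<0$, then $j\geq k$, since the first $k-1$ eigenvalues are positive in $\G_k$ (indeed the first $k$). Then $\s_{k-1}(\l|j)\geq\s_{k-1}(\l)$ and $\s_{\ell-1}(\l|j)\leq C\s_{\ell-1}$ in the appropriate sense, so $F_j\geq F_i$ for all $i$ with $\l_i\geq0$. Combined with monotonicity, $F_j\geq c\,F_n$, which gives \eqref{eqn.Fj>} immediately. The argument is simply that $\l_j\leq\l_i$ implies $F_j\geq F_i$, so $F_j\geq F_i$ for every $i\leq j$. Then $F_j\geq\frac1n\sum_{i\leq j}F_i$, and the remaining indices $i>j$ must be handled via Step 1 showing $F_i\leq CF_j$ among comparable negative eigenvalues.

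\emph{Step 3 (the main obstacle: large positive $\l_j$).} Here I must show $F_j\l_j^2\geq c\sum F_i$, i.e.\ $F_j\l_j^2\gtrsim F_n$. This is exactly where $k-\ell\in\{1,2\}$ enters; the Pogorelov/Urbas examples show that it fails otherwise. I would use Property \ref{prop.inequality}(3): $\sum_i\l_iF_i=(k-\ell)F\leq k-\ell$. I would also use the expansion $\s_\ell^2F_j\approx\s_{k-1}(\l|j)\s_\ell(\l|j)$ up to a lower-order term, giving $F_j\sim F\cdot\big(\s_{k-1}(\l|j)/\s_k\big)\cdot\big(\s_\ell/\s_\ell(\l|j)\big)^{-1}$. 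Heuristically $F_j\sim F/\l_j$ when $\l_j$ is among the large ones and $\ell<j$, and $F_j\sim F\cdot\l_j^{-1}\cdot(\text{ratio})$ otherwise. Meanwhile $\sum F_i\sim\s_{k-1}/\s_\ell$. With $F\sim1$, $\s_k\sim\s_\ell$, so $\sum F_i\sim\s_{k-1}/\s_k$, which for $k-\ell\leq2$ is controlled by $1/\l_{k}$-type quantities; the product $\l_j^2F_j$ then dominates. Concretely, I would split into $\l_j\leq1$, where $\l_j$ is bounded and everything reduces to Step 1 comparability, and $\l_j\geq1$. In the latter case I would prove $\l_jF_j\geq c\,\s_k(\l|j)$-free bounds and use $\l_j\geq1$ with $k-\ell\leq2$ to absorb at most two powers of large eigenvalues. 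I expect this case analysis, especially controlling the subtracted term $\s_k\s_{\ell-1}(\l|j)/\s_\ell^2$ when $\ell\geq1$, to be the delicate step. There I would try first the known inequality $\s_{k-1}(\l|j)\s_\ell(\l|j)\geq c\,\s_k(\l|j)\s_{\ell-1}(\l|j)$-improvement (Newton–Maclaurin on $\l|j\in\G_{k-1}$) to reduce $F_j$ to $c\,\s_{k-1}(\l|j)/\s_\ell$.
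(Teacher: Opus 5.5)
Your proposal is a plan rather than a proof. Neither inequality is actually established, and both decisive estimates appear only as expectations.

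For \eqref{eqn.Fj>}, the ordering $F_1\le\cdots\le F_n$ gives $F_j\ge F_i$ only for $i\le j$, which is the wrong direction. What you need is $F_i\le CF_j$ for \emph{all} $i>j$ whenever $\lambda_j<0$. Equivalently, $F_j\ge cF_n$, since $\sum_iF_i\le nF_n$. It is not enough to treat ``comparable'' negative eigenvalues, because $\lambda_n$ may be far more negative than $\lambda_j$. So ``combined with monotonicity, $F_j\ge cF_n$'' is a non sequitur. The claim is pushed back to Step 1, which only states that you expect $F_j\gtrsim F_n$ for $j\ge k$. That is a genuine Lin--Trudinger-type inequality for the quotient. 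It needs the lower bound $\sigma_{k-1}(\lambda|j)\ge c\,\sigma_{k-1}(\lambda)$ for $j\ge k$, together with quantitative control of the subtracted term $\sigma_k\sigma_{\ell-1}(\lambda|j)/\sigma_\ell^2$. None of this is carried out.

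For \eqref{eqn.Fj-lambdaj>} with $j\le k-1$, which is where $k-\ell\in\{1,2\}$ has to be used, Step 3 stays heuristic, and the reduction you propose to try first is false. Newton--Maclaurin on $\lambda|j$ gives only $F_j\ge c\,\sigma_{k-1}(\lambda|j)\sigma_\ell(\lambda|j)/\sigma_\ell^2$. The factor $\sigma_\ell(\lambda|j)/\sigma_\ell$ cannot be dropped, since it is of order $\lambda_j^{-1}$ when $j\le\ell$. Take $n=k=2$, $\ell=1$, and $\lambda=(M,\epsilon)$ with $\epsilon=M/(M-1)$. Then $F(\lambda)=1$, $F_1=\epsilon^2/(M+\epsilon)^2\approx M^{-2}$ and $F_2\approx1$, whereas $\sigma_1(\lambda|1)/\sigma_1\approx M^{-1}$. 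So $F_1\lambda_1\to0$ while $\sum_iF_i\approx1$, and the weight $\lambda_j^2$ is sharp.

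Your case $\lambda_j\le1$ also does not reduce to comparability without using $k-\ell\le2$. For $F=\sigma_3$ and $\lambda=(M,1,M^{-1})$ one has $F=1$ and $F_2(1+\lambda_2^2)=2$, but $\sum_iF_i\approx M$. The missing ingredient is therefore a genuine quantitative argument for all $j\le k-1$ that exploits $k-\ell\le2$.

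The paper does not redo this algebra. It takes both inequalities on $\{F=1\}$ from \cite[Lemma 2.2]{W26} for $\ell\ge1$, and from \cite[Lemma 2.1]{LW26b} for $\ell=0$, where $F=\sigma_2$. It then passes to $3/4\le F^{1/(k-\ell)}\le1$ by rescaling $\mu=\lambda/A$ with $A=F(\lambda)^{1/(k-\ell)}$, using $F_i(\mu)=A^{1-(k-\ell)}F_i(\lambda)$ and $1+\lambda_j^2\ge A^2(1+\mu_j^2)$.
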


\begin{proof}
On the level set $\{F=1\}$,
the estimates for $1\leq\ell\leq n$ follow from the proof of \cite[Lemma 2.2]{W26}.
When $\ell=0$, we have $k=2$ and $F=\s_2$,
so the estimates follow from \cite[Lemma 2.1]{LW26b}
and the identity $\sum_{i=1}^nF_i=(n-1)\s_1$.
Let $c_1^*=c_1^*(n,k)$ and $c_2^*=c_2^*(n,k)$
denote the corresponding positive constants on $\{F=1\}$.
Let
\[
A:=F(\l)^{1/(k-\ell)},
\quad
\mu:=\frac{\l}{A}.
\]
Since $F$ is homogeneous of degree $k-\ell$, we have
\[
\mu\in\G_k,
\quad
F(\mu)=1,
\quad
F_i(\mu)=A^{1-(k-\ell)}F_i(\l).
\]
Applying \cite[Lemma 2.2]{W26} to $\mu$, we obtain
\[
F_j(\mu)(1+\mu_j^2)
\geq c_1^*\sum_{i=1}^nF_i(\mu)
\quad\text{and}\quad
F_j(\mu)\geq c_2^*\sum_{i=1}^nF_i(\mu)
\quad\text{if}\ \l_j<0.
\]
Rescaling back to $\l$ and using $3/4\leq A\leq1$, we get
\begin{align*}
F_j(\l)(1+\l_j^2)
&\geq F_j(\l)(A^2+\l_j^2)
=A^{k-\ell+1}F_j(\mu)(1+\mu_j^2)
\\
&\geq c_1^*A^2\sum_{i=1}^nF_i(\l)
\geq\frac9{16}c_1^*\sum_{i=1}^nF_i(\l).
\\
F_j(\l)
&\geq c_2^*\sum_{i=1}^nF_i(\l)
\quad\text{if}\ \l_j<0.
\end{align*}
This completes the proof with $c_1:=\frac9{16}c_1^*$ and $c_2:=c_2^*$.
\end{proof}

\medskip

Finally, we collect two classical results for the Hessian quotient equation:
the solvability of the Dirichlet problem,
and the interior gradient estimate.

\begin{lemma}[Classical solvability,
\texorpdfstring{\cite[Theorem 1.1]{Tru95}}{}]
\label{lem.solvability}
Let $0\leq \ell<k\leq n$,
$r>0$, $f\in C^{1,1}(\ol{B_r})$ with $f>0$ in $\ol{B_r}$
and $g\in C^{3,1}(\p B_r)$.
Then for any $\a\in(0,1)$,
there exists a unique $k$-convex solution $u\in C^{3,\a}(\ol{B_r})$ to the Dirichlet problem
\begin{equation}\label{eqn.Dirichlet}
F(D^2u)=f\ \textrm{in}\ B_r,
\quad
u=g\ \textrm{on}\ \p B_r.
\end{equation}
\end{lemma}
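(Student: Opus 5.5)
This is a quoted classical result, Trudinger's solvability theorem for Hessian quotient equations on balls, so the plan is to verify that its hypotheses hold rather than to reprove the theory. The strategy is the continuity method together with a priori estimates in $C^{2,\a}(\ol{B_r})$, as in Caffarelli--Nirenberg--Spruck \cite{CNS85}.

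\textbf{Structure.} First I record the structural facts listed at the start of this section. The operator $F^{1/(k-\ell)}$ is elliptic, concave and homogeneous of degree one on $\G_k$. Since $f>0$ on $\ol{B_r}$, the right-hand side $f^{1/(k-\ell)}$ is bounded above and below. So on the relevant level sets the equation is uniformly elliptic once $|D^2u|$ is controlled. Uniqueness follows at once from the comparison principle for elliptic operators applied to the difference of two $k$-convex solutions.

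\textbf{Continuity path and estimates.} For existence I take a path $F(D^2u_t)=tf+(1-t)f_0$ with boundary data $g$. Here $f_0$ and a starting solution come from a large quadratic $\frac A2|x|^2+$ (harmonic extension of $g$); the path can be reparametrized so that it starts from this explicit solution. Openness is the implicit function theorem. Closedness needs uniform $C^{2,\a}$ bounds, obtained in the following order.
\begin{enumerate}[(i)]
\item A $C^0$ bound from comparison with barriers: the subharmonic bound $u\le h$ (harmonic extension), and a large quadratic subsolution from below.
\item A boundary gradient bound from the same barriers, and an interior gradient bound from the maximum principle applied to $|Du|^2$ or to a suitable function of it.
\item Global $C^2$ bounds, reduced to the boundary by differentiating the equation twice and using concavity.
\end{enumerate}
Evans--Krylov theory, using the concavity of $F^{1/(k-\ell)}$, then gives $C^{2,\a}$. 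Differentiating the equation once and applying Schauder theory, with $f\in C^{1,1}$ and $g\in C^{3,1}$, upgrades the solution to $C^{3,\a}$.

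\textbf{Main obstacle.} The hard step is the boundary double-normal estimate $u_{\nu\nu}\le C$ on $\p B_r$. Tangential second derivatives are controlled by $g$ and the curvature of the sphere. Mixed derivatives are handled by the standard barrier $\pm T(u-g)+$ a quadratic in $\op{dist}$, using the linearized operator. For the double-normal term, the approach of \cite{CNS85,Tru95} is to show that the tangential eigenvalues of $D^2u$ restricted to $\p B_r$ lie uniformly inside $\G_{k-1}$, which uses the uniform convexity of the sphere. Expanding $\s_k$ and $\s_\ell$ in $u_{\nu\nu}$ then shows that $F\to\infty$ as $u_{\nu\nu}\to\infty$, so $F=f$ forces $u_{\nu\nu}$ to be bounded. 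Trudinger's contribution is exactly this step for quotients, handled via a careful minimization argument over the boundary. Since Lemma~\ref{lem.solvability} is stated as a citation of \cite[Theorem 1.1]{Tru95}, I would defer to that reference for this step rather than reproduce it.
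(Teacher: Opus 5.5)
The paper gives no argument for this lemma beyond citing \cite[Theorem 1.1]{Tru95}. Deferring the hard step to that reference is therefore exactly what the paper does, and your outline of the continuity method is the standard one. (Minor slip: the starting function should be $\frac A2(|x|^2-r^2)+h$, so that it equals $g$ on $\partial B_r$.)

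However, the one step where you describe the cited argument is wrong in the quotient case $1\leq\ell<k$, which is precisely where \cite{Tru95} is needed. Write $\lambda'$ for the tangential eigenvalues and $R=u_{\nu\nu}$. Then
\[
F(\lambda',R)=\frac{R\,\sigma_{k-1}(\lambda')+\sigma_k(\lambda')}{R\,\sigma_{\ell-1}(\lambda')+\sigma_\ell(\lambda')}\longrightarrow\frac{\sigma_{k-1}(\lambda')}{\sigma_{\ell-1}(\lambda')}<\infty\quad\text{as }R\to\infty .
\]
So $F$ stays bounded as $u_{\nu\nu}\to\infty$, and knowing that $\lambda'$ lies uniformly inside $\Gamma_{k-1}$ gives no bound on $u_{\nu\nu}$. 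Your mechanism is the CNS one and is valid only for $\ell=0$.

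What Trudinger actually proves is a uniform gap
\[
\frac{\sigma_{k-1}(\lambda')}{\sigma_{\ell-1}(\lambda')}\geq f+c_0\quad\text{on }\partial B_r
\]
for some $c_0>0$. The argument runs as follows.
\begin{enumerate}
\item At a point $x_0\in\partial B_r$ where the left side minus $f$ is minimal, a barrier built from the concavity of the limiting function $(\sigma_{k-1}/\sigma_{\ell-1})^{1/(k-\ell)}$ bounds $u_{\nu\nu}(x_0)$.
\item The map $R\mapsto F(D^2u(x_0)+R\,\nu\otimes\nu)$ is strictly increasing with limit $\sigma_{k-1}(\lambda')/\sigma_{\ell-1}(\lambda')$. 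Together with the bound on $D^2u(x_0)$, this gives the gap at $x_0$, hence on all of $\partial B_r$.
\item Only with this gap does a large $u_{\nu\nu}$ at a boundary point force $F(D^2u)>f$ there, a contradiction. Here one uses that the eigenvalues of $D^2u$ at that point are $(\lambda'+o(1),u_{\nu\nu}+O(1))$.
\end{enumerate}

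Since you defer this step anyway, the proposal stands as a citation. As an explanation of why the quotient case works, though, that sentence should be replaced by the gap argument above.
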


\begin{lemma}[Gradient estimate,
\texorpdfstring{\cite[Theorem 1.1]{C15}}{}]
\label{lem.gradient-estimate}
Let $0\leq \ell<k\leq n$, $r>0$, and $f\in C^1(B_r)$ with $f>0$ in $B_r$.
Suppose $u\in C^3(B_r)$ is a $k$-convex solution of $F(D^2u)=f$ in $B_r$.
Then
\[
\|Du\|_{L^\infty(B_{r/2})}
\leq C\frb{n,k,\ell,\|Df\|_{L^\infty(B_r)}}
\frb{\frac{\op{osc}_{B_r}u}{r}
+\frb{\op{osc}_{B_r}u}^{\frac{k-l+1}{2(k-l)}}
+\frb{\op{osc}_{B_r}u}^{\frac{k-l}{2(k-l)+1}}}.
\]
\end{lemma}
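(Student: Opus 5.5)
The plan is a maximum-principle argument applied to an auxiliary function built from $|Du|$, a function of $u$, and a spatial cutoff, in the spirit of Chou--Wang and Li--Trudinger type gradient bounds. We write $M:=\op{osc}_{B_r}u$.

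\textbf{Normalization.} First reduce to $r=1$ by the rescaling $u_r(x):=u(rx)/r^2$. This preserves the equation, with $f$ replaced by $f(r\,\cdot)$, and it is where the term $M/r$ in the final bound originates. Subtracting a constant, assume $0\le u\le M$ in $B_1$.

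\textbf{Auxiliary function.} Consider, on $\ol{B_1}$,
\[
G(x):=\log|Du|+h(u)+\log\rho(x),\qquad \rho:=1-|x|^2,\qquad h(u):=-\tfrac12\log\Bigl(1+\tfrac{M-u}{M'}\Bigr),
\]
with $M'\simeq M$ fixed later. Then $h'>0$ is bounded and $h''\ge 2h'^2$, which supplies the good quadratic term. $G$ attains its maximum at an interior point $x_0$ where $Du\ne0$.

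At $x_0$, rotate coordinates so that $Du=u_1e_1$ and $(u_{ij})_{i,j\ge2}$ is diagonal. The first-order conditions give
\[
\frac{u_{1i}}{u_1}=-h'u_i-\frac{\rho_i}{\rho},
\]
so in particular $u_{11}=-h'u_1^2-u_1\rho_1/\rho$. The second-order condition $F^{ij}G_{ij}\le0$ is then expanded. Third derivatives are removed by differentiating the equation, $F^{ij}u_{ij1}=f_1$, which contributes the $\|Df\|_{L^\infty}$ dependence. The remaining terms are of the form
\[
h'\,F^{ij}u_{ij}=(k-\ell)h'f\ge0,\qquad (h''-h'^2)F^{11}u_1^2,\qquad F^{ij}\rho_{ij}/\rho,\qquad F^{11}\rho_1^2/\rho^2.
\]

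\textbf{Case split at $x_0$.} If $\rho u_1\le C M'$ there is nothing to prove. Otherwise $u_{11}\approx-h'u_1^2<0$ is large and negative. For $\l\in\G_k$, a negative eigenvalue direction carries a fixed fraction of the trace of the linearized operator; this is the analogue of \eqref{eqn.Fj>} in Lemma \ref{lem.Fj-lambdaj}, but it is needed without the level-set normalization, and the homogeneity scaling $\mu=\l/F^{1/(k-\ell)}$ used in that lemma transfers it. The only care is that $D^2u$ is not diagonal in the $e_1$ direction. Its off-diagonal entries $u_{1i}$ are controlled by the first-order condition, so one argues with $F^{11}\ge c\sum_iF^{ii}$, obtained by comparing the $(1,1)$ entry with the eigenvalues. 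The good term $h'^2F^{11}u_1^2\ge c\,h'^2u_1^2\sum F^{ii}$ then absorbs the cutoff errors $C\sum F^{ii}/\rho$ and $C\,F^{11}/\rho^2$ after multiplying through by $\rho^2$.

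\textbf{Main obstacle.} The remaining error is $|f_1|/u_1$, which is not proportional to $\sum F^{ii}$. It must be compared with $\sum F^{ii}$ through the lower bound
\[
\sum_iF_i\ge c(n,k,\ell)\,f^{1-1/(k-\ell)},
\]
which follows from concavity and homogeneity of $F^{1/(k-\ell)}$. This lower bound degenerates in $f$, and balancing it against the oscillation produces the fractional powers $M^{(k-\ell+1)/(2(k-\ell))}$ and $M^{(k-\ell)/(2(k-\ell)+1)}$ in the statement. I would handle it by choosing $M'$ as a power of $M$ optimized separately in the two regimes $M\le1$ and $M\ge1$. I expect this bookkeeping, together with the non-diagonal $F^{11}$ lower bound, to be the delicate part. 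The conclusion is $\rho(x_0)\,u_1(x_0)\le C(\ldots)$, and restricting to $B_{1/2}$, where $\rho\ge3/4$, gives the stated estimate.
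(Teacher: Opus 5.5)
The paper does not prove this lemma; it simply quotes \cite{C15}. Your Chou--Wang-type scheme is a sensible framework, but the step you single out as the main obstacle does not close the way you propose once $k-\ell\geq2$. After the cutoff errors are absorbed (the regime $\rho h'u_1\geq C$), your inequality at $x_0$ is essentially
\[
c\,h'^2u_1^2\sum_{i=1}^nF^{ii}+(k-\ell)h'f\leq\frac{|f_1|}{u_1}.
\]
Suppose the only lower bound you use is $\sum_iF_i\geq c\,f^{1-1/(k-\ell)}$. Then both terms on the left tend to $0$ as $f(x_0)\to0$, but the right side need not. The hypotheses give no lower bound on $f$, and for $C^1$ data $|Df(x_0)|$ can be comparable to $\|Df\|_{L^\infty}$ while $f(x_0)$ is arbitrarily small. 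No choice of $M'$ depending only on $M$, in either regime $M\leq1$ or $M\geq1$, produces a contradiction. So the fractional powers cannot come out of this balancing. (For $k-\ell=1$ the concavity bound is uniform, and this part of your argument is fine.)

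What is missing is a lower bound for the linearized coefficients that does not depend on $f$ and comes from the large negative second derivative itself. In your bad regime, $\l_{\min}(D^2u)\leq u_{11}\leq-\frac12h'u_1^2=:-N$; if $k=n$ this regime is empty. Take $\l\in\G_k$ ordered decreasingly with $\l_n=-N$. Deleting the $k-1$ largest entries leaves a vector in $\G_1$, so $\l_k>N/(n-k)$. Moreover $\s_{k-1}(\l)\geq\l_1\cdots\l_{k-1}$ and $\s_\ell(\l)\leq C\l_1\cdots\l_\ell$. Property \ref{prop.inequality}(2) combined with the Newton--Maclaurin inequality gives $\sum_iF_i\geq\frac{(n-k+1)(k-\ell)}{k}\frac{\s_{k-1}}{\s_\ell}$. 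Hence
\[
\sum_{i=1}^nF_i(\l)\geq c(n,k,\ell)\,N^{k-\ell-1}.
\]
Inserting this with $h'\simeq1/M$ yields $u_1^{2(k-\ell)+1}\ls\|Df\|_{L^\infty}M^{k-\ell+1}$. Since $M^{\frac{k-\ell+1}{2(k-\ell)+1}}\leq M^{\frac{k-\ell+1}{2(k-\ell)}}+M^{\frac{k-\ell}{2(k-\ell)+1}}$, this gives the stated form.

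Separately, your non-diagonal claim $F^{11}\geq c\sum_iF^{ii}$ is asserted rather than proved, but you can bypass it. Keep the whole term $u_1^{-2}F^{ij}u_{ik}u_{kj}=u_1^{-2}\sum_mF_m\l_m^2\geq u_1^{-2}F_{m_0}N^2$, where $m_0$ indexes the most negative eigenvalue. In that eigenvalue frame an estimate of type \eqref{eqn.Fj>} (valid for all quotients) gives $F_{m_0}\geq c\sum_iF_i$. Then note that with $h''=2h'^2$ the $F^{11}$ contributions of $-2u_1^{-2}F^{ij}u_{1i}u_{1j}$ and $h''F^{11}u_1^2$ cancel exactly.
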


\section{Proof of the theorem}\label{sec.doubling}

Throughout this section, let $u$ be a smooth $k$-convex solution
of \eqref{eqn.sigma-quotient} in $B_2$, and set
\[
K_0:=1+\norm{u}_{C^1(B_2)},
\quad b:=\l_{\max}(D^2u).
\]
Choose a fixed cutoff $\zeta\in C_0^\infty(B_2)$
such that $0\leq\zeta\leq1$ in $B_2$, $\zeta=1$ in $B_{1/32}$,
$\op{supp}\zeta\subset B_{1/4}$,
and $\|D\zeta\|_{L^\infty(B_1)}\leq C(n)$.
By Lemma \ref{lem.solvability},
there exists a smooth $k$-convex solution $v$ of
\begin{equation}\label{eqn.v}
\begin{cases}
F(D^2v)=\frb{1-\zeta/4}^{k-\ell}&\text{in }B_{3/2},\\
v=u&\text{on }\p B_{3/2}.
\end{cases}
\end{equation}
The comparison principle gives $u\leq v$ in $B_{3/2}$.
Since $\l(D^2v)\in\G_k$, we have $\t v>0$ in $B_{3/2}$,
and hence the maximum principle gives
\[
\sup_{B_{3/2}}v\leq\sup_{\p B_{3/2}}v=\sup_{\p B_{3/2}}u\leq K_0.
\]
On the other hand, $v\geq u\geq-K_0$ in $B_{3/2}$.
Therefore $\norm{v}_{L^\infty(B_{3/2})}\leq K_0$.
By the gradient estimate (Lemma \ref{lem.gradient-estimate}),
we choose $K=K(n,k,K_0)\geq1$ such that
\begin{equation}\label{eqn.gradient-v-u}
\norm{Dv}_{L^\infty(B_1)}
+\norm{u}_{C^1(B_2)}
+\norm{\zeta}_{C^1(B_{3/2})}\leq K.
\end{equation}
In particular, $|D(v-u)|\leq K$ in $B_1$.

\medskip

We next quantify the separation between $v$ and $u$.

\subsection{A seed separation}

We adapt the seed argument of \cite[Lemma 3.1]{LW26b}.

\begin{lemma}\label{lem.seed-F}
There exist constants $r=r(n,k,K)\in(0,1/32)$, $\ve_0=\ve_0(n,k,K)>0$,
and a point $x_*\in\ol{B_{1/32}}$ such that
\[
v-u\geq\ve_0
\quad\text{in}\ \ol{B_r(x_*)}.
\]
\end{lemma}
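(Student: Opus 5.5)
We follow the outline sketched in the introduction: first an integral lower bound for $(v-u)\s_{k-1}(D^2v)$ near the origin, and then a conversion of this integral bound into a pointwise lower bound on a small ball, using the gradient bound \eqref{eqn.gradient-v-u}.

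\emph{Step 1: $v-u$ is a strict supersolution in $B_\rho$.} In $B_{1/32}$ we have $\zeta=1$, so $F(D^2v)=(3/4)^{k-\ell}$ there. Write $G:=F^{1/(k-\ell)}$; it is concave and homogeneous of degree one on $\G_k$. Concavity gives
\[
G^{ij}(D^2v)(u-v)_{ij}\geq G(D^2u)-G(D^2v)=1-\tfrac34=\tfrac14 .
\]
Converting to $L_v$ through $F^{ij}=(k-\ell)G^{k-\ell-1}G^{ij}$ and $G(D^2v)=3/4$, we obtain
\[
L_v(v-u)\leq -c(n,k)<0 \quad\text{in } B_{1/32}.
\]
Now multiply by $\s_\ell(D^2v)$. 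Set $\wt F^{ij}=\s_\ell(D^2v)F^{ij}$. Since $F(D^2v)$ is constant in $B_{1/32}$,
\[
\wt F^{ij}=\frac{\p\s_k}{\p v_{ij}}-(3/4)^{k-\ell}\frac{\p\s_\ell}{\p v_{ij}},
\]
a constant combination of Newton tensors, so it is divergence free. Using $\s_\ell(D^2v)\geq c\,\s_{k-1}(D^2v)^{\ell/(k-1)}>0$ we get only $\wt F^{ij}(v-u)_{ij}\leq -c\,\s_\ell(D^2v)$. A cleaner route avoids this: by Maclaurin-type inequalities on the level set $F=(3/4)^{k-\ell}$, we have $\s_\ell(D^2v)\geq c(n,k)$, since $\s_k\leq C\s_\ell^{k/\ell}$ and $\s_k=c\,\s_\ell$ together force $\s_\ell$ bounded below when $\ell\geq1$; the case $\ell=0$ is trivial. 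Hence
\[
\wt F^{ij}(v-u)_{ij}\leq -c_0(n,k) \quad\text{in } B_{1/32}.
\]

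\emph{Step 2: integrate by parts.} Take a cutoff $\eta\in C_0^\infty(B_{1/32})$ with $\eta=1$ on $B_{1/64}$. Multiply the inequality of Step 1 by $\eta$ and integrate by parts twice, using $\p_i\wt F^{ij}=0$:
\[
c_0\int\eta\leq-\int (v-u)\,\wt F^{ij}\eta_{ij}\leq C\int_{B_{1/32}}(v-u)\,\op{tr}\wt F .
\]
Here we use $v-u\geq0$. For the trace, $\op{tr}\wt F\leq (n-k+1)\s_{k-1}(D^2v)+C\s_{\ell-1}(D^2v)\leq C\s_{k-1}(D^2v)$, where the last inequality holds because $\s_{\ell-1}\leq C\s_{k-1}$ once $\s_{k-1}$ is bounded below on this level set. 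This yields
\[
1\ls \int_{B_{1/32}}(v-u)\,\s_{k-1}(D^2v).
\]

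\emph{Step 3: from the integral bound to a pointwise bound.} This is the main obstacle, because $\s_{k-1}(D^2v)$ is not bounded a priori; only $|Dv|\leq K$ is available. We use the Newton-tensor structure $\s_{k-1}(D^2v)=c\,\p_i(T_{k-2}^{ij}v_j)$, which is again divergence form. Integrating $(v-u)\eta\,\s_{k-1}$ by parts once moves a derivative onto $(v-u)\eta$. The resulting integrand involves $|D(v-u)|\,|Dv|\,\op{tr}T_{k-2}\leq CK^2\s_{k-2}(D^2v)$, plus a term with $(v-u)$ and $D\eta$. We iterate this downward, enlarging the cutoff supports within $B_{1/32}$. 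However, a factor $(v-u)$ does not survive once the derivative falls on it. To keep smallness, we instead argue by contradiction: suppose $v-u<\ve$ on all of $B_{1/32}$. Then on each term where the derivative hits $v-u$, we use the bound $|D(v-u)|\leq K$ together with the smallness: by interpolation on a ball where $v-u<\ve$ and $D^2(v-u)$ is one-sided controlled, the gradient is at most of order $\ve^{1/2}$. We descend to $\s_0$ and obtain
\[
1\ls C(n,k,K)\,\ve^{1/2},
\]
a contradiction for small $\ve$. The one-sided control comes from $\t v>0$ and the semi-concavity type bound $\t(v-u)$ via $L_v(v-u)\leq0$; if this fails, we would simply keep the factor $(v-u)^{1/2}$ by writing $D(v-u)=2(v-u)^{1/2}D(v-u)^{1/2}$.

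\emph{Conclusion.} We thus find a point $x_*\in\ol{B_{1/32}}$ with $(v-u)(x_*)\geq2\ve_0$. The Lipschitz bound $|D(v-u)|\leq K$ then gives $v-u\geq\ve_0$ on $\ol{B_r(x_*)}$ with $r=\ve_0/K$, which proves the lemma.
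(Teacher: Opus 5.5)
Your Steps 1 and 2 follow the paper's argument, but Step 3 has a genuine gap. The contradiction rests on the claim that $|D(v-u)|\lesssim\ve^{1/2}$ wherever $0\le v-u<\ve$. For a nonnegative $g$, the inequality $|Dg|^2\le 2Mg$ needs a one-sided bound $D^2g\le MI$ with $M$ under control. For $g=v-u$ this means an upper bound on $D^2v$ and a lower bound on $D^2u$ in terms of $n,k,K$ alone. That amounts to an interior Hessian bound for $v$ plus semiconvexity of $u$, which is essentially the theorem being proved.

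Nothing you cite supplies this. The inequality $\t v>0$ bounds a trace from below, which is the wrong direction. The inequality $L_v(v-u)\le0$ is a single trace inequality against the degenerate, unbounded matrix $F^{ij}(D^2v)$, and it gives no pointwise one-sided control of $D^2(v-u)$. Your fallback $D(v-u)=2(v-u)^{1/2}D(v-u)^{1/2}$ is circular: $|D(v-u)^{1/2}|$ is bounded exactly when $|D(v-u)|^2\lesssim v-u$, which is the same unproved inequality.

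The obstacle you identified is not really there. You never need a pointwise bound on $\s_{k-1}(D^2v)$, only a bound on its integral, and you should not let a derivative fall on $v-u$. Since $v-u\ge0$ and $\s_{k-1}(D^2v)>0$, first pull out the supremum:
\[
1\ls\int_{B_{1/32}}(v-u)\,\s_{k-1}(D^2v)\le\sup_{B_{1/32}}(v-u)\int_{B_{1/32}}\s_{k-1}(D^2v).
\]
Then run the Newton-tensor integration by parts on $\int\xi\,\s_m(D^2v)$ alone. Use $m\s_m(D^2v)=\p_p\bigl(\s_m^{pq}(D^2v)v_q\bigr)$ with nested cutoffs $\xi$ placed between $B_{1/32}$ and $B_{1/16}$, outside $B_{1/32}$ rather than within it. The derivative then lands only on $\xi$. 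Positive definiteness of $(\s_m^{pq})$ on $\G_k$ for $m\le k-1$, together with $|Dv|\le K$ in $B_1$, gives $\int\xi\,\s_m\le C(n,k,K)\int_{\op{supp}\xi}\s_{m-1}$.

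Iterating down to $\s_0=1$ yields $\int_{B_{1/32}}\s_{k-1}(D^2v)\le C(n,k,K)$. Hence $\sup_{B_{1/32}}(v-u)\ge c(n,k,K)$ directly, with no contradiction argument. This is exactly the paper's proof, and your Lipschitz conclusion then finishes it, provided $r$ is also taken below $1/32$.

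A minor point in Step 2: $\sum_i\wt F^{ii}=(n-k+1)\s_{k-1}-(n-\ell+1)F\s_{\ell-1}$, so the $\s_{\ell-1}$ term has a favorable sign and can simply be dropped. Your stated reason for $\s_{\ell-1}\le C\s_{k-1}$, a lower bound on $\s_{k-1}$, does not imply it. The inequality itself does hold, by the generalized Newton--Maclaurin inequality.
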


\begin{proof}
Fix $\rho:=1/32$.
Since $\zeta=1$ in $B_\rho$, we have $F(D^2v)=\frb{3/4}^{k-\ell}$ in $B_\rho$.
By the concavity of $F^{1/(k-\ell)}$, we obtain
\begin{align}
\sum_{i,j=1}^nF^{ij}(D^2v)(v-u)_{ij}
&\leq(k-\ell)F(D^2v)^{1-\frac{1}{k-\ell}}
\frb{F(D^2v)^{\frac1{k-\ell}}-F(D^2u)^{\frac1{k-\ell}}}
\nonumber\\
&=-\frac{k-\ell}{4}\frb{\frac34}^{k-\ell-1}
=:-\varsigma<0.
\label{eqn.polarized}
\end{align}
Write $\s_m^{ij}(M):=\frac{\p\s_m(M)}{\p M_{ij}}$ for any $m\in\Z_{[0,k]}$ and $\s_0^{ij}:=0$.
Set
\[
\wt F^{ij}
:=\s_\ell(D^2v)F^{ij}(D^2v)
=\s_k^{ij}(D^2v)-F\s_\ell^{ij}(D^2v)
\quad\text{in}\ B_\rho.
\]
Then $(\wt F^{ij})>0$.
Differentiating and using the symmetry of the third derivatives,
we obtain
\begin{equation}\label{eqn.DwtF=0}
\sum_{i=1}^n\p_i\s_m^{ij}(D^2v)=0,
\quad
\sum_{i=1}^n\p_i\wt F^{ij}=0\quad\text{for all}\ j\in\Z_{[1,n]}.
\end{equation}
Moreover,
by Property \ref{prop.inequality}, it follows that
\begin{equation}\label{eqn.Fii-upper}
0<\sum_{i=1}^n\wt F^{ii}\leq(n-k+1)\s_{k-1}(D^2v).
\end{equation}
The Newton--Maclaurin inequalities and $F(D^2v)=\frb{3/4}^{k-\ell}$
imply
\begin{equation}\label{eqn.sigma-k-1>}
\s_\ell(D^2v)\geq c(n,k)>0\quad\text{in}\ B_\rho.
\end{equation}

Next, choose $\eta\in C_c^\infty(B_1)$ satisfying
$\eta\geq0$ in $B_1$, $\int_{B_1}\eta=1$ and $\norm{D^2\eta}_{L^\infty(B_1)}\leq C(n)$,
and set $\eta_\rho(x):=\rho^2\eta(x/\rho)$.
Then $\eta_\rho\in C_c^\infty(B_\rho)$, $\int_{B_\rho}\eta_\rho=\rho^{n+2}$
and $\norm{D^2\eta_\rho}_{L^\infty(B_\rho)}\leq C(n)$.
Multiplying \eqref{eqn.polarized} by $\eta_\rho\s_\ell(D^2v)$,
integrating by parts twice,
and using \eqref{eqn.Fii-upper}, \eqref{eqn.DwtF=0} and \eqref{eqn.sigma-k-1>}, we obtain
\begin{align*}
c\varsigma\rho^{n+2}
&\leq\varsigma\int_{B_\rho}\eta_\rho\s_\ell(D^2v)
\leq-\int_{B_\rho}\eta_\rho\sum_{i,j=1}^n\wt F^{ij}(v-u)_{ij}
\\
&=\int_{B_\rho}\sum_{i,j=1}^n\wt F^{ij}(\eta_\rho)_j(v-u)_i
=-\int_{B_\rho}(v-u)\sum_{i,j=1}^n\wt F^{ij}(\eta_\rho)_{ij}
\\
&\leq C(n)\sup_{B_\rho}(v-u)
\int_{B_\rho}\sum_{i=1}^n\wt F^{ii}
\\
&\leq C(n,k)\sup_{B_\rho}(v-u)
\int_{B_\rho}\s_{k-1}(D^2v).
\end{align*}
We now estimate $\int_{B_\rho}\s_{k-1}(D^2v)$.
Set $\rho_j:=\rho+\frac{j\rho}{k-1}$ for $j\in\Z_{[0,k-1]}$.
Then $\rho_0=\rho$, $\rho_{k-1}=2\rho$, and $B_{2\rho}\subset B_1$.
For each $j\in\Z_{[0,k-2]}$, let $m:=k-1-j\geq1$
and choose $\xi_j\in C_0^\infty(B_{\rho_{j+1}})$
such that $0\leq\xi_j\leq1$ in $B_{\rho_{j+1}}$, $\xi_j=1$ in $B_{\rho_j}$,
and $\abs{D\xi_j}\leq\frac{C(n,k)}{\rho}$ in $B_{\rho_{j+1}}$.
Using the Newton-tensor identities and $|Dv|\leq K$ in $B_1$,
we obtain
\begin{align*}
\int_{B_{\rho_j}}\s_m(D^2v)
&\leq\int_{B_{\rho_{j+1}}}\xi_j\s_m(D^2v)
=\frac{1}{m}\int_{B_{\rho_{j+1}}}\xi_j\sum_{p,q=1}^n\s_m^{pq}(D^2v)v_{pq}
\\
&=-\frac{1}{m}\int_{B_{\rho_{j+1}}}\sum_{p,q=1}^n(\xi_j)_p\s_m^{pq}(D^2v)v_q
\leq\frac{C(n,k)K}{\rho}\int_{B_{\rho_{j+1}}}\sum_{p=1}^n\s_m^{pp}(D^2v)
\\
&=\frac{C(n,k)K(n-m+1)}{\rho}\int_{B_{\rho_{j+1}}}\s_{m-1}(D^2v).
\end{align*}
Iterating gives
\begin{align*}
\int_{B_\rho}\s_{k-1}(D^2v)
\leq\frac{C}{\rho}\int_{B_{\rho_1}}\s_{k-2}(D^2v)
\leq\cdots
\leq\frb{\frac{C}{\rho}}^{k-1}\int_{B_{2\rho}}\s_0(D^2v)
\leq C(n,k,K)\rho^{n-k+1}.
\end{align*}
Combining this with the preceding estimate gives
\[
\sup_{B_\rho}(v-u)
\geq c(n,k,K)\rho^{k+1}=:c_0>0.
\]
Choose $x_*\in\ol{B_\rho}$ such that $v(x_*)-u(x_*)\geq c_0$.
Take $r=r(n,k,K)\in(0,1/32)$ sufficiently small that
$Kr\leq c_0/2$, and set $\ve_0:=c_0/2$.
Then $B_r(x_*)\subset B_{1/16}$, and the gradient bound \eqref{eqn.gradient-v-u} gives
\begin{align*}
v(x)-u(x)
&\geq v(x_*)-u(x_*)
-\norm{D(v-u)}_{L^\infty(B_1)}|x-x_*|
\\
&\geq c_0-Kr\geq\ve_0
\quad\text{for any}\ x\in B_r(x_*).
\end{align*}
This completes the proof.
\end{proof}

\begin{remark}\label{rem.seed-compactness}
The seed lemma also follows from a compactness argument,
without using the divergence-free structure of the Newton tensors.
By the gradient estimate,
it suffices to show $\sup_{B_\rho}(v-u)\geq c(n,k,K)>0$.
Suppose to the contrary that
there exist a sequence of $k$-convex solutions $u_j$ of \eqref{eqn.sigma-quotient}
and the corresponding solutions $v_j$ of \eqref{eqn.v} such that
\[
\norm{u_j}_{C^1(B_2)}\leq K,
\quad
0\leq v_j-u_j\leq\frac{1}{j}
\quad\text{in}\ B_\rho.
\]
By the Arzel\`a--Ascoli theorem,
after passing to a subsequence,
$u_j\to u_\infty$ locally uniformly in $B_2$.
Moreover, $v_j\to u_\infty$ locally uniformly in $B_\rho$.
The closedness of viscosity solutions gives
\[
F(D^2u_\infty)=1,
\quad
F(D^2u_\infty)=\frb{3/4}^{k-\ell}
\quad\text{in}\ B_\rho\ \text{in the viscosity sense}.
\]
By the Alexandrov-type differentiability \cite[Lemma 5.1]{W26} when $\ell\geq1$
(or \cite[Proposition 4.1]{SY25} when $\ell=0$),
together with the two viscosity equations,
there exist $y\in B_\rho$ and a quadratic polynomial $P$
such that $u_\infty(y+z)=P(y+z)+o(|z|^2)$,
$\l(D^2P)\in\G_k$, and $1=F(D^2P)=\frb{3/4}^{k-\ell}$ at $y$,
a contradiction.
\end{remark}

\medskip

We next construct a barrier for $L_v$ to propagate this lower bound.

\subsection{A barrier for the linearized operator \texorpdfstring{$L_v$}{Lv}}\label{subsec.barrier}

Keep $r=r(n,k,K)\in(0,1/32)$ from Lemma \ref{lem.seed-F}.
For $y\in B_{1/2}$, we have $\ol{B_{4r}(y)}\subset B_{3/2}$.
Consider the auxiliary function
\[
\vp_y(x):=(x-y)\cdot Dv(x)-v(x)+v(y)
+\frac{\a}{2}|x-y|^2-\b|Dv(x)|^2,
\]
where $\a=\a(n,k,r,K)\geq1$ and $\b=\b(n,r,K)>0$ will be chosen below.
The gradient estimate \eqref{eqn.gradient-v-u} gives
\[
\abs{(x-y)\cdot Dv(x)-v(x)+v(y)}\leq2K|x-y|
\quad\text{for any}\ x\in\ol{B_{4r}(y)}.
\]
Then
\[
\vp_y\geq-8Kr-\b K^2\quad\text{in}\ B_{4r}(y),\quad
a_y:=\sup_{\ol{B_{2r}(y)}}\vp_y
\leq4Kr+2\a r^2.
\]
Choosing $\a$ such that $6r^2\a-K^2\b-12Kr-2>0$,
we have
\[
\inf_{\p B_{4r}(y)}\vp_y
\geq-8Kr+\frac{\a}{2}(4r)^2-\b K^2
>4Kr+2\a r^2+2
\geq a_y+2=:c_y.
\]
Hence
\[
\vp_y<c_y
\quad\text{in}\ \ol{B_{2r}(y)},
\quad
\vp_y>c_y
\quad\text{on}\ \p B_{4r}(y).
\]
Thus the connected component $\Om_y$ of
$\{x\in B_{4r}(y):\vp_y(x)<c_y\}$
containing $B_{2r}(y)$ satisfies
\begin{align*}
\ol{B_{2r}(y)}\subset\Om_y\Subset B_{4r}(y).
\end{align*}
Next, define the associated exponential auxiliary function
\begin{align*}
\psi_y:=e^{(c_y-\vp_y)/\g}-1
\quad\text{in}\ \Om_y,
\end{align*}
where $\g=\g(n,k,r,K)>0$ will be chosen later.
Then $\psi_y>0$ in $\Om_y$ and $\psi_y=0$ on $\p\Om_y$.
Moreover,
\begin{gather}
\psi_y\geq e^{1/\g}-1=:c_0>0
\quad\text{in}\ \ol{B_{2r}(y)}.
\label{eqn.psi-lower}
\\
\psi_y\leq
\exp\frb{\frac{12Kr+2\a r^2+\b K^2+2}{\g}}-1
=:C_0
\quad\text{in}\ \Om_y.
\label{eqn.psi-upper}
\end{gather}

\begin{lemma}\label{lem.barrier}
For any $y\in B_{1/2}$,
the parameters $\a,\b,\g$ can be chosen depending only on $n,k,K$
such that
\begin{equation}\label{eqn.barrier-positive}
L_v\psi_y>0\quad\text{in}\ \Om_y\sm B_r(y).
\end{equation}
\end{lemma}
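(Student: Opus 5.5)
We will compute $L_v\psi_y$ directly and show that the gradient term produced by the exponential dominates. Since $\psi_y=e^{(c_y-\vp_y)/\g}-1$, we have
\[
L_v\psi_y=\frac{\psi_y+1}{\g^2}\Big(F^{ij}(D^2v)(\vp_y)_i(\vp_y)_j-\g\,F^{ij}(D^2v)(\vp_y)_{ij}\Big).
\]
It therefore suffices to show that the bracket is positive in $\Om_y\sm B_r(y)$. Fix such a point $x$ and rotate so that $D^2v(x)$ is diagonal with eigenvalues $\l_i$. Write $z:=x-y$; note $r\leq|z|\leq 4r$ and $|Dv|\leq K$ there. Differentiating gives
\begin{align*}
(\vp_y)_i&=\l_i\,(z_i-2\b v_i)+\a z_i,\\
F^{ij}(\vp_y)_{ij}&=(k-\ell)F+(z-2\b Dv)\cdot D\big(F(D^2v)\big)+\a\sum_iF_i-2\b\sum_iF_i\l_i^2 .
\end{align*}
We have used $F^{ij}v_{ij}=(k-\ell)F$ and differentiated the equation once. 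Since $F(D^2v)=(1-\zeta/4)^{k-\ell}$ and $\|\zeta\|_{C^1}\leq K$, the first two terms are bounded by $C_1(n,k,K)$. Also $3/4\leq F^{1/(k-\ell)}\leq1$, so Lemma~\ref{lem.Fj-lambdaj} applies. By Newton--Maclaurin, or by homogeneity and concavity of $F^{1/(k-\ell)}$, we have $\sum_iF_i\geq c_3(n,k)>0$. Hence $C_1\leq (C_1/c_3)\sum_iF_i$. The task is thus to show
\[
\sum_iF_i(\vp_y)_i^2+2\b\g\sum_iF_i\l_i^2>\g\,(\a+C_1/c_3)\sum_iF_i .
\]

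Because $|z|\geq r$, there is an index $i$ with $|z_i|\geq r/\sqrt n$. We split into two cases according to the sign and size of $\l_i$, where $\a\geq 2$ is large.

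\emph{Case $\l_i\geq-\a/2$.} Choose $\b=\b(n,r,K)$ so small that $2\b K\leq r/(4\sqrt n)$. Then $|2\b\l_iv_i|\leq\frac12|\l_i+\a|\,|z_i|$. This holds when $\l_i\geq0$ directly, and when $-\a/2\leq\l_i<0$ it holds because $|\l_i|\leq\a/2\leq|\l_i+\a|$. Therefore
\[
|(\vp_y)_i|\geq\tfrac12|\l_i+\a|\,|z_i|\geq \frac{r}{4\sqrt n}\max(\a,|\l_i|)\cdot c .
\]
Combined with \eqref{eqn.Fj-lambdaj>} and $\a\geq1$, this gives $F_i(\vp_y)_i^2\geq c\,r^2F_i(1+\l_i^2)\geq c\,c_1r^2\sum_jF_j$.

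\emph{Case $\l_i<-\a/2$.} By \eqref{eqn.Fj>}, $F_i\geq c_2\sum_jF_j$. The term $2\b\g F_i\l_i^2$ is then at least $\frac{\b\g c_2\a^2}{2}\sum_jF_j$. This exceeds $\g(\a+C_1/c_3)\sum_jF_j$ once $\a\geq C(n,k,K)/\b$. In this case the gradient term is merely discarded as nonnegative.

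The order in which the constants are fixed is the crux. First choose $\b$ small in terms of $n,r,K$. Then choose $\a$ large in terms of $\b$, large enough both for the second case and for the geometric condition $6r^2\a-K^2\b-12Kr-2>0$ used to build $\Om_y$; this is compatible because $\b$ has already been fixed. Last, choose $\g$ small so that $\g(\a+C_1/c_3)<c\,c_1r^2$, which closes the first case. Since $r=r(n,k,K)$, all constants depend only on $n,k,K$.

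The main obstacle I expect is the second case. The tempting approach is to absorb the bad terms $\g\a\sum F_i$ and $\g C_1$ using the gradient term alone. That fails when $\l_i\approx-\a$, because then $(\vp_y)_i$ can nearly vanish. The way around it is to use the $-\b|Dv|^2$ part of $\vp_y$, whose contribution $2\b\g\sum F_i\l_i^2$ is good. Together with the lower bound \eqref{eqn.Fj>} for negative eigenvalues, this handles the case, at the price of taking $\a\gg1/\b$. It must then be checked that this ordering remains compatible with the first case and with the domain construction, which it does since $\g$ is chosen last.
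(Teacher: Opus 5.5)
Your proof is correct and follows the paper's argument. It uses the same expansion of $L_v\psi_y$, the same reduction (via $\sum_iF_i\geq c(n,k)>0$) to showing that $\sum_iF_i(\vp_y)_i^2+2\b\g\sum_iF_i\l_i^2$ beats $\g(\a+C)\sum_iF_i$, the same coordinate with $|x_i-y_i|\geq r/\sqrt n$, both parts of Lemma~\ref{lem.Fj-lambdaj}, and the same order of constants: $\b$, then $\a\geq C(n,k,K)/\b$, then $\g$. The only difference is that you split the chosen eigenvalue at $-\a/2$ rather than at $0$, so your large-negative case uses the $\b$-term alone and does not involve $\g$. The paper's case $\l_j<0$ instead minimizes $\frac1\g(\vp_y)_j^2+2\b\l_j^2$ over $\l_j$, which costs the extra requirement $\g\leq r^2/(2\b n)$.
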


\begin{proof}

After a rotation of coordinates, we may assume that
\[
D^2v(x)
=\op{diag}(\l_1,\l_2,\ldots,\l_n),
\quad
\l_1\geq\l_2\geq\cdots\geq\l_n.
\]
All quantities below are evaluated at $x$.
By the homogeneity and differentiation of equation $F(D^2v)=\frb{1-\zeta/4}^{k-\ell}$,
we have
\begin{gather*}
L_vv=(k-\ell)\frb{1-\frac\zeta4}^{k-\ell},
\\
\sum_{i,j=1}^nF^{ij}(D^2v)v_{ijl}
=-(k-\ell)\frb{1-\frac\zeta4}^{k-\ell-1}\frac{\zeta_l}{4}
\quad\text{for all}\ l\in\Z_{[1,n]}.
\end{gather*}
Consequently,
\begin{align*}
(\vp_y)_i
&=\frb{x_i-y_i-2\b v_i}\l_i+\a(x_i-y_i),
\\
(\vp_y)_{ij}
&=v_{ij}+\sum_{l=1}^n(x_l-y_l-2\b v_l)v_{ijl}+\a\d_{ij}-2\b\sum_{l=1}^n v_{li}v_{lj},
\\
L_v\psi_y
&=\frac{\psi_y+1}{\g}
\bigg[\frac{1}{\g}\sum_{i=1}^nF_i(\vp_y)_i^2
+2\b\sum_{i=1}^nF_i\l_i^2
-\a\sum_{i=1}^nF_i
\\
&\qquad
-(k-\ell)\frb{1-\frac{\zeta}{4}}^{k-\ell-1}
\frb{1-\frac{\zeta}{4}
-\frac{1}{4}(x-y-2\b Dv)\cdot D\zeta}
\bigg].
\end{align*}
Set $\mathds 1:=(1,1,\ldots,1)\in\R^n$.
Since $F^{1/(k-\ell)}$ is concave and homogeneous in $\G_k$,
we have
\begin{align*}
\frac{1}{c_*}:=F^{1/(k-\ell)}(\mathds1)
&\leq F^{1/(k-\ell)}(\l)+DF^{1/(k-\ell)}(\l)\cdot(\mathds1-\l)
\\
&=\sum_{i=1}^nF^{1/(k-\ell)}_i(\l)
=\frac{\sum_{i=1}^nF_i}{(k-\ell)\frb{1-\zeta/4}^{k-\ell-1}},
\end{align*}
and hence
\[
\sum_{i=1}^nF_i\geq\frac{(k-\ell)}{c_*}\frb{1-\frac{\zeta}{4}}^{k-\ell-1}.
\]
Moreover, the bounds $0\leq\zeta\leq1$, $|Dv|+|D\zeta|\leq K$ give
\[
1-\frac{\zeta}{4}
-\frac14(x-y-2\b Dv)\cdot D\zeta
\leq1+\frb{r+\frac{\b K}{2}}K=:c_3.
\]
Consequently, setting $\a_1:=\a+c_*c_3$,
we obtain
\begin{equation}\label{eqn.Lvpsi-lower}
L_v\psi_y
\geq\frac{\psi_y+1}{\g}
\frb{\frac{1}{\g}\sum_{i=1}^nF_i(\vp_y)_i^2+2\b\sum_{i=1}^nF_i\l_i^2
-\a_1\sum_{i=1}^nF_i}.
\end{equation}
Therefore, to prove $L_v\psi_y>0$, it suffices to prove
\begin{equation}\label{eqn.Q>Fi}
Q:=\frac{1}{\g}\sum_{i=1}^nF_i(\vp_y)_i^2+2\b\sum_{i=1}^nF_i\l_i^2
>\a_1\sum_{i=1}^nF_i.
\end{equation}
Since $|x-y|\geq r$, there exists $j\in\Z_{[1,n]}$ such that
\[
|x_j-y_j|\geq|x-y|/\sqrt n\geq r/\sqrt n.
\]
Set $z_j:=x_j-y_j-2\b v_j$.
Then $(\vp_y)_j=z_j\l_j+\a(x_j-y_j)$.
Choosing $\b:=\frac{r}{16K\sqrt n}$, we have
\[
2\b|v_j|\leq2\b K
=\frac{r}{8\sqrt n}
\leq\frac18|x_j-y_j|.
\]
Hence
\begin{gather}
(x_j-y_j)z_j
=(x_j-y_j)^2-2\b\frb{x_j-y_j}v_j
\geq\frac78|x_j-y_j|^2>0,
\label{eqn.zj-bound}
\\
\frac78|x_j-y_j|\leq|z_j|
\leq\frac98|x_j-y_j|.
\nonumber
\end{gather}
Next, we distinguish two cases.

\textit{Case 1. $\l_j\geq0$}.
By \eqref{eqn.zj-bound},
the terms $z_j\l_j$ and $\a(x_j-y_j)$ have the same sign.
By $\a\geq1$ and Lemma \ref{lem.Fj-lambdaj},
there exists $c_1=c_1(n,k)>0$ such that
\[
Q
\geq\frac{F_j}{\g}(\vp_y)_j^2
\geq\frac{F_j}{\g}\frb{\frac78\l_j+\a}^2|x_j-y_j|^2
\geq\frac{49r^2}{64n\g}F_j(1+\l_j^2)
\geq\frac{49r^2c_1}{64n\g}\sum_{i=1}^nF_i.
\]
Hence \eqref{eqn.Q>Fi} holds whenever $\g<\frac{49r^2c_1}{64n\a_1}$.

\textit{Case 2. $\l_j<0$}.
Take $\g$ with $\g\leq\frac{r^2}{2\b n}=\frac{8Kr}{\sqrt n}$.
Then $2\b\g\leq(x_j-y_j)^2$.
By Lemma \ref{lem.Fj-lambdaj},
there exists $c_2=c_2(n,k)>0$ such that
\begin{align*}
Q
\geq\frac{1}{\g}F_j(\vp_y)_j^2+2\b F_j\l_j^2
\geq\frac{2\b\a^2(x_j-y_j)^2}{z_j^2+2\b\g}F_j
\geq\frac{1}{2}\b\a^2F_j
\geq\frac{1}{2}\b\a^2 c_2\sum_{i=1}^nF_i.
\end{align*}
Choose $\a\geq\max\set{1,c_*c_3,\frac{8}{\b c_2}}$.
Then $\frac{1}{2}\b\a^2 c_2>2\a\geq\a+c_*c_3=\a_1$, and hence \eqref{eqn.Q>Fi} holds.
Since $\sum_{i=1}^nF_i>0$,
\eqref{eqn.Lvpsi-lower} and \eqref{eqn.Q>Fi} yield $L_v\psi_y(x)>0$.
\end{proof}

\medskip

Since $L_v(v-u)\leq0$,
Lemma \ref{lem.barrier} allows us to use $\psi_y$ as a barrier for $v-u$.
The comparison principle then yields the quantitative
propagation of lower bounds from $B_r(y)$ to $B_{2r}(y)$.

\subsection{Propagation of the separation}

\begin{lemma}\label{lem.propagation}
Let $y\in B_{1/2}$ and $\e>0$.
If $v-u\geq\e$ in $B_r(y)$, then
\[
v-u\geq\th\e\quad\text{in }B_{2r}(y),
\]
where $\th=\th(n,k,K)\in(0,1)$ is independent of $\e$.
\end{lemma}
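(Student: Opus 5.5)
We compare $v-u$ with a multiple of the barrier $\psi_y$ from Lemma \ref{lem.barrier} on the annular region $\Om_y\sm\ol{B_r(y)}$. Set
\[
h:=v-u-\frac{\e}{C_0}\psi_y ,
\]
where $C_0=C_0(n,k,K)$ is the upper bound \eqref{eqn.psi-upper}. The aim is to show $h\geq0$ on $\Om_y\sm\ol{B_r(y)}$ by the maximum principle for $L_v$, and then read off the lower bound on $B_{2r}(y)$ from \eqref{eqn.psi-lower}.

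\emph{Differential inequality for $v-u$.} First we verify $L_v(v-u)\leq0$ on $\Om_y\subset B_{3/2}$. By the concavity of $F^{1/(k-\ell)}$, exactly as in \eqref{eqn.polarized},
\[
L_v(v-u)\leq(k-\ell)F(D^2v)^{1-\frac1{k-\ell}}\frb{F(D^2v)^{\frac1{k-\ell}}-F(D^2u)^{\frac1{k-\ell}}}
=(k-\ell)F(D^2v)^{1-\frac1{k-\ell}}\frb{1-\frac\zeta4-1}\leq0 .
\]
Together with \eqref{eqn.barrier-positive}, this gives
\[
L_vh\leq -\frac{\e}{C_0}L_v\psi_y<0\quad\text{in }\Om_y\sm\ol{B_r(y)} .
\]
Hence $h$ has no interior minimum there, and its minimum over the closure is attained on $\p\Om_y\cup\p B_r(y)$. (Equivalently one may work on $\Om_y\sm\ol{B_{r'}(y)}$ with $r'\uparrow r$; the strict inequality in \eqref{eqn.barrier-positive} together with continuity handles the boundary sphere.)

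\emph{Boundary values.} On $\p\Om_y$ we have $\psi_y=0$ and $v-u\geq0$, so $h\geq0$. On $\p B_r(y)$, continuity and the hypothesis give $v-u\geq\e$, while $\psi_y\leq C_0$, so $h\geq\e-\e=0$. Thus $h\geq0$ on $\Om_y\sm B_r(y)$.

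\emph{Conclusion.} Since $\ol{B_{2r}(y)}\subset\Om_y$, the estimate \eqref{eqn.psi-lower} gives
\[
v-u\geq\frac{\e}{C_0}\psi_y\geq\frac{c_0}{C_0}\e\quad\text{on } B_{2r}(y)\sm B_r(y).
\]
On $B_r(y)$ we already have $v-u\geq\e$. We may therefore take $\th:=\min\set{c_0/C_0,\,1/2}$. Since $\a,\b,\g$ depend only on $n,k,K$, both $c_0=e^{1/\g}-1$ and $C_0$ do too, so $\th$ depends only on $n,k,K$ and not on $\e$. Moreover $c_0<C_0$ because $c_y-\vp_y$ ranges over more than $[2,\cdot]$, so $\th\in(0,1)$.

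The only delicate point is the maximum principle on the non-smooth open set $\Om_y\sm\ol{B_r(y)}$. It is harmless here: $v$ is smooth up to the relevant compact set, $L_v$ is uniformly elliptic on the compact closure of $\Om_y$ (since $D^2v$ is bounded there), and we only use the weak form, namely that a strict supersolution attains its minimum on the boundary.
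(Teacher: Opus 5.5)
Your proposal is correct and follows the paper's proof essentially step for step. You compare $v-u$ with $\frac{\e}{C_0}\psi_y$ outside $\ol{B_r(y)}$, use $L_v(v-u)\leq0<\frac{\e}{C_0}L_v\psi_y$ together with the boundary values on $\p\Om_y$ and $\p B_r(y)$, and conclude with $\th=\min\set{1/2,c_0/C_0}$ via \eqref{eqn.psi-lower}. The only cosmetic difference is that the paper restricts to the connected component $U_y$ of $\Om_y\sm\ol{B_r(y)}$ containing $B_{2r}(y)\sm\ol{B_r(y)}$, while you work on the whole set, which is equally valid.
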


\begin{proof}
Let $U_y$ be the connected component of $\Om_y\sm\ol{B_r(y)}$
containing $B_{2r}(y)\sm\ol{B_r(y)}$.
Then
\[
\p U_y\subset\p\Om_y\cup\p B_r(y).
\]
Recalling the lower bound \eqref{eqn.psi-lower} and the upper bound \eqref{eqn.psi-upper} for $\psi_y$,
we have $v-u\geq\e\geq\frac{\e}{C_0}\psi_y$ on $\p U_y\cap\p B_r(y)$.
Moreover,
$v-u\geq0=\psi_y$ on $\p U_y\cap\p\Om_y$.
Consequently,
\[
(v-u)\geq \frac{\e}{C_0}\psi_y\quad\text{on}\ \p U_y.
\]
By the concavity of $F^{1/(k-\ell)}$ and the equations for $u$ and $v$,
\begin{align*}
L_v(v-u)
&=F^{ij}(D^2v)(v-u)_{ij}
\\
&\leq(k-\ell)\frb{F(D^2v)}^{1-\frac1{k-\ell}}
\frb{F^{\frac1{k-\ell}}(D^2v)
-F^{\frac1{k-\ell}}(D^2u)}
\\
&=-(k-\ell)\frb{1-\frac{\zeta}{4}}^{k-\ell-1}
\frac{\zeta}{4}
\leq0
\quad\text{in}\ B_{3/2}.
\end{align*}
Together with Lemma \ref{lem.barrier}, this gives
\[
L_v(v-u)\leq0<\frac{\e}{C_0}L_v\psi_y
\quad\text{in}\ U_y.
\]
The maximum principle therefore yields $v-u\geq\frac{\e}{C_0}\psi_y$ in $\ol{U_y}$.
Since $\psi_y\geq c_0$ in $\ol{B_{2r}(y)}$, we obtain
\[
v-u\geq\frac{\e}{C_0}\psi_y\geq\frac{c_0}{C_0}\e
\quad\text{in}\ B_{2r}(y)\sm\ol{B_r(y)}.
\]
Set $\th:=\min\set{\frac{1}{2},\frac{c_0}{C_0}}$.
Then $v-u\geq\e\geq\th\e$ in $\ol{B_r(y)}$.
Thus $v-u\geq\th\e$ in $\ol{B_{2r}(y)}$.
\end{proof}

\subsection{Comparison domain and comparison function}\label{subsec.comparison}

Using the seed lemma (Lemma \ref{lem.seed-F})
and then iterating the propagation lemma (Lemma \ref{lem.propagation}),
we obtain a uniform positive lower bound for $v-u$ in $B_{1/2}$.

\begin{lemma}[Uniform interior separation]\label{lem.separation}
There exists $\ve_1=\ve_1(n,k,K)>0$ such that
\begin{equation}\label{eqn.uniform-separation}
v-u\geq\ve_1\quad\text{in}\ B_{1/2}.
\end{equation}
\end{lemma}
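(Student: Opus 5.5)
We combine the seed separation of Lemma \ref{lem.seed-F} with a finite chain of applications of the propagation estimate of Lemma \ref{lem.propagation}. All constants depend only on $n,k,K$: the radius $r$, the seed value $\ve_0$, the center $x_*\in\ol{B_{1/32}}$ and the factor $\th$. The task is to cover $B_{1/2}$ by balls reached from $B_r(x_*)$ in a number of steps $N=N(n,k,K)$. This gives the uniform lower bound $\ve_1:=\th^N\ve_0$.

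\textbf{Covering step.} Take any $x\in B_{1/2}$. Consider the segment from $x_*$ to $x$, which lies in $B_{1/2}$ by convexity, since $|x_*|\leq1/32<1/2$. Its length is at most $1/2+1/32<1$. Place points $y_0:=x_*,y_1,\dots,y_N:=x$ on this segment with $|y_{i+1}-y_i|\leq r/2$, where $N:=\lceil 2/r\rceil$. All $y_i$ lie in $B_{1/2}$, so Lemma \ref{lem.propagation} applies to each of them. Moreover $B_r(y_{i+1})\subset B_{2r}(y_i)$, because $|z-y_i|\leq|z-y_{i+1}|+r/2<2r$ for $z\in B_r(y_{i+1})$.

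\textbf{Induction.} We claim $v-u\geq\th^i\ve_0$ in $B_r(y_i)$. For $i=0$ this is Lemma \ref{lem.seed-F}. If the claim holds for $i$, then Lemma \ref{lem.propagation} with $y=y_i$ and $\e=\th^i\ve_0$ gives $v-u\geq\th^{i+1}\ve_0$ in $B_{2r}(y_i)\supset B_r(y_{i+1})$.

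\textbf{Conclusion.} Hence $v-u\geq\th^N\ve_0$ at $x=y_N$. If the chain actually needs fewer than $N$ steps, we pad it by repeating points; this is allowed since $\th<1$. The constant $\ve_1:=\th^{N}\ve_0$ with $N=\lceil2/r\rceil$ depends only on $n,k,K$.

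\textbf{Main point to check.} The one thing to verify is that every chain center $y_i$ lies in $B_{1/2}$, because Lemma \ref{lem.propagation}, through Lemma \ref{lem.barrier} and the inclusion $\ol{B_{4r}(y)}\subset B_{3/2}$, is only available for such $y$. The convexity argument above settles this, so no genuine obstacle arises.
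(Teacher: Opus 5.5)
Your proposal is correct and is essentially the paper's argument. The paper just says to iterate Lemma \ref{lem.propagation} along a finite chain of overlapping balls starting from the seed ball of Lemma \ref{lem.seed-F}, as in \cite[Lemma 3.4]{LW26b}; your chain along a segment, with spacing at most $r/2$, $N=\lceil 2/r\rceil$ steps, centers kept in $B_{1/2}$, and $\ve_1=\th^N\ve_0$, supplies exactly those details.
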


\begin{proof}
Iterating the propagation lemma
along a finite chain of overlapping balls gives the conclusion,
as in \cite[Lemma  3.4]{LW26b}.
\end{proof}

\medskip

The decay estimate shows that $\{v-u>\d\}$ stays a uniform distance away from $\p B_{3/2}$.

\begin{lemma}\label{lem.boundary-decay}
There exists $C=C(n,K)$ such that
\begin{equation}\label{eqn.boundary-decay}
0\leq (v-u)(x)
\leq C d_x^{1/2}
\quad\text{for any}\ x\in B_{3/2},
\end{equation}
where $d_x$ denotes the distance from $x$ to $\p B_{3/2}$.
\end{lemma}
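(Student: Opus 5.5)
The lower bound $v-u\geq0$ is the comparison principle already noted after \eqref{eqn.v}, so only the upper bound needs proof. The idea is to trap $v$ between $u$ and a harmonic function $h$ with the same boundary values, and then use a global H\"older estimate for $h$. Since $\l(D^2v)\in\G_k$, we have $\t v>0$ in $B_{3/2}$. Let $h$ solve $\t h=0$ in $B_{3/2}$ with $h=u$ on $\p B_{3/2}$. The maximum principle gives $v\leq h$ in $B_{3/2}$, and hence
\[
0\leq v-u\leq h-u\quad\text{in}\ B_{3/2}.
\]
It therefore suffices to show $(h-u)(x)\leq Cd_x^{1/2}$.

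To bound $h-u$, fix $x\in B_{3/2}$ and let $x_0\in\p B_{3/2}$ be its nearest boundary point, so $|x-x_0|=d_x$. Write
\[
h(x)-u(x)=\frb{h(x)-h(x_0)}+\frb{u(x_0)-u(x)}.
\]
The second term is at most $Kd_x$ by \eqref{eqn.gradient-v-u}. Indeed, $u$ is $C^1$ on $B_2\supset\ol{B_{3/2}}$ with $|Du|\leq K$, and since $d_x\leq 3/2$ we get $Kd_x\leq CKd_x^{1/2}$.

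For the first term, the boundary data $g=u|_{\p B_{3/2}}$ is Lipschitz with constant $K$. The standard global H\"older estimate for the Dirichlet problem of the Laplacian on a ball then gives $h\in C^{0,1/2}(\ol{B_{3/2}})$ with seminorm $\leq C(n)K$. To make this self-contained, I would use the Poisson kernel, or the following barrier argument. For each $x_0\in\p B_{3/2}$ and $\e>0$, consider
\[
w_\pm(x)=u(x_0)\pm\frb{\e+A_\e|x-x_0|^{1/2}}.
\]
One checks that $|x-x_0|^{1/2}$ is superharmonic on $\R^n\sm\set{x_0}$ for $n\geq2$: we have $\t|z|^{s}=s(s+n-2)|z|^{s-2}$, which is $\geq0$ for $s=1/2$ when $n\geq2$. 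So this radial function is the wrong kind of barrier.

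This is the step I expect to be the main obstacle: producing a correct barrier. I would fix it with the exterior-ball barrier. Since $B_{3/2}$ is convex, it lies in the half-space $\set{(x-x_0)\cdot\nu<0}$, where $\nu=x_0/|x_0|$. The linear function $\ell(x)=-(x-x_0)\cdot\nu$ is harmonic, positive in $B_{3/2}$, and vanishes only at $x_0$ on $\ol{B_{3/2}}$. Even better, it satisfies $\ell(x)\geq|x-x_0|^2/3$ on $\p B_{3/2}$, since $|x-x_0|^2=2\cdot\frac32\ell$ there.

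Now compare $h$ with $u(x_0)\pm\frb{\e+A\ell/\e}$. Lipschitz data gives $|g(x)-g(x_0)|\leq K|x-x_0|$ on the boundary. If $|x-x_0|\geq\e/K$, then $\ell\geq|x-x_0|^2/3\geq\e^2/(3K^2)$, so choosing $A\sim K^2$ dominates $|g-g(x_0)|$ there; where $|x-x_0|<\e/K$ the constant $\e$ handles it. The maximum principle then gives $|h(x)-u(x_0)|\leq\e+CK^2\ell(x)/\e$. Since $\ell(x)=d_x$ for $x$ on the segment to $x_0$, choosing $\e=K d_x^{1/2}$ yields $|h(x)-h(x_0)|\leq CKd_x^{1/2}$.

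Combining the two terms gives \eqref{eqn.boundary-decay} with $C=C(n,K)$. All the constants trace back to \eqref{eqn.gradient-v-u} and are independent of $v$ beyond the $k$-convexity used to get $\t v>0$.
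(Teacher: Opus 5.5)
Your proof is correct and is essentially the paper's argument. The paper, via \cite[Lemma 3.5]{LW26b}, also traps $u\leq v\leq h$, with $h$ the harmonic extension of $u|_{\p B_{3/2}}$, and bounds $h-u$ using the global H\"older estimate for $h$ together with the Lipschitz bound on $u$; your half-space barrier $u(x_0)\pm(\e+3K^2\ell/\e)$ with $\e=Kd_x^{1/2}$ simply proves that H\"older estimate directly. Two small slips to fix: the discarded radial function $|x-x_0|^{1/2}$ is subharmonic (as your own computation shows), not superharmonic; and the choice $A=3K^2$ should be justified by $A\ell/\e=A|x-x_0|^2/(3\e)\geq K|x-x_0|$ when $|x-x_0|\geq\e/K$, not by the lower bound $\ell\geq\e^2/(3K^2)$.
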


\begin{proof}
The harmonic comparison argument in \cite[Lemma 3.5]{LW26b} applies here,
since $v$ is subharmonic and $v=u$ on $\p B_{3/2}$.
\end{proof}

\medskip

Combining these two lemmas,
we obtain a comparison domain $\Om$ and a comparison function $w$,
so that $w-u$ serves as a cutoff function in the doubling argument.

\begin{lemma}\label{lem.comparison-domain}
Let $\ve_1=\ve_1(n,k,K)$ be as in Lemma \ref{lem.separation},
$\d:=\ve_1/2$,
and let $w:=v-\d$ in $B_{3/2}$.
Suppose that $\Om$ is a connected component of $\set{w>u}$ containing $B_{1/2}$.
Then there exists $d_0=d_0(n,K)\in(0,1/4)$ such that
\begin{gather*}
\Om\Subset B_{3/2-d_0},
\quad
w-u>0\ \text{in}\ \Om,
\quad
w=u\ \text{on}\ \p\Om,
\\
w-u\geq\d\ \text{in}\ B_{1/2},
\quad
\|Dw\|_{L^\infty(\Om)}\leq C(n,k,K).
\end{gather*}
\end{lemma}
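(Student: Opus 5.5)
Combine the two preceding lemmas. Since $B_{1/2}\subset\{w>u\}$ by Lemma \ref{lem.separation} ($w-u=v-u-\d\geq\ve_1-\d=\d>0$ there), and $B_{1/2}$ is connected, there is a unique component $\Om$ of the open set $\set{w>u}\subset B_{3/2}$ containing $B_{1/2}$. The bound $w-u\geq\d$ in $B_{1/2}$ is the same computation, and $w-u>0$ in $\Om$ holds by definition.

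For the localization, use Lemma \ref{lem.boundary-decay}: if $x\in\Om$ then $\d<(v-u)(x)\leq Cd_x^{1/2}$, so $d_x>(\d/C)^2$. Set $d_0:=\min\set{\frac18,\frac12(\d/C)^2}$. Then $\ol\Om\subset\set{d_x\geq 2d_0}\subset B_{3/2-d_0}$, so $\Om\Subset B_{3/2-d_0}$. Strictly, $\d=\ve_1/2$ depends on $n,k,K$, so $d_0$ does too; I would state this, or note that the dependence through $\ve_1$ is harmless. Since $\Om$ is a connected component of an open set and $w,u$ are continuous on $\ol{B_{3/2-d_0}}\subset B_{3/2}$, every boundary point $x\in\p\Om$ lies in $B_{3/2}$ but not in $\set{w>u}$; otherwise a small ball around $x$ would lie in $\set{w>u}$ and meet $\Om$, so it would belong to $\Om$, which is open. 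Hence $w\leq u$ on $\p\Om$. Being a limit of points where $w>u$, we also get $w\geq u$ there, so $w=u$ on $\p\Om$.

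The main obstacle is the gradient bound, since \eqref{eqn.gradient-v-u} only controls $Dv$ on $B_1$, while $\Om$ may extend to $B_{3/2-d_0}$. Since $Dw=Dv$, I would reapply the gradient estimate (Lemma \ref{lem.gradient-estimate}) on balls $B_{d_0}(x)$ with $x\in B_{3/2-d_0}$. These balls lie in $B_{3/2}$, where $\norm{v}_{L^\infty}\leq K_0$ and $\norm{D((1-\zeta/4)^{k-\ell})}_{L^\infty}\leq C(n)$. This gives $|Dv|\leq C(n,k,K_0,d_0)$ on $B_{3/2-d_0}\supset\Om$, and since $K_0\leq K$ the constant depends only on $n,k,K$.
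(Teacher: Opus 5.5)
Your proposal is correct and follows essentially the same route as the paper. Lemma \ref{lem.separation} puts $B_{1/2}$ inside $\{w>u\}$, the decay bound $v-u\leq Cd_x^{1/2}$ keeps $\{w>u\}$ a fixed distance from $\p B_{3/2}$, and reapplying the interior gradient estimate to $v$ on balls of radius comparable to $d_0$ bounds $Dw=Dv$ on $\Om$. Your remark on the dependence of $d_0$ is also accurate: the paper's choice $C(n,K)d_0^{1/2}\leq\ve_1/4$ likewise makes $d_0$ depend on $k$ through $\ve_1$, which is harmless.
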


\begin{proof}
Choose $d_0=d_0(n,K)\in(0,1/4)$ sufficiently small
such that
\[
C(n,K)d_0^{1/2}\leq\ve_1/4,
\]
where $C(n,K)$ comes from Lemma \ref{lem.boundary-decay}.
By Lemma \ref{lem.separation},
$v-u\geq\ve_1>\d$ in $B_{1/2}$,
hence $B_{1/2}\subset\set{v-u>\d}=\set{w>u}$.
For any $x\in B_{3/2}$,
if $d_x\leq d_0$, then $|x|=\frac32-d_x\geq\frac32-d_0>\frac54$.
Hence it follows from Lemma \ref{lem.boundary-decay} that
\begin{equation}
(v-u)(x)\leq\frac{\ve_1}{4},
\quad
\text{whenever}\ d_x\leq d_0.
\end{equation}
Thus, the set $\set{v-u>\d}=\set{w>u}$ cannot intersect $\set{d_x\leq d_0}$.
This shows that $\Om\Subset B_{3/2-d_0}$,
and hence $B_{d_0/2}(x)\subset B_{3/2}$ for any $x\in\Om$.
Therefore,
\[
w>u\quad\text{in}\ \Om,\
\quad
w=u\quad\text{on}\ \p\Om,\
\quad
w-u\geq\d\quad\text{in}\ B_{1/2}.
\]
Applying the gradient estimate (Lemma \ref{lem.gradient-estimate}) in $B_{d_0/2}(x)$, we get
\[
|Dv(x)|
\leq C(n,k,K_0,d_0)\norm{v}_{L^\infty(B_{d_0/2}(x))}
\leq C(n,k,K).
\]
Thus
\[
\norm{Dw}_{L^\infty(\Om)}=\norm{Dv}_{L^\infty(\Om)}\leq C(n,k,K).
\]
This completes the proof.
\end{proof}

\subsection{The doubling inequality}\label{subsec.doubling}

\begin{lemma}[Doubling inequality]\label{lem.doubling}
Let $b:=\l_{\max}(D^2u)$.
Then
\[
\sup_{B_{1/2}}b
\leq C(n,k,K_0)\sup_{B_{1/4}}b.
\]

Furthermore, for any $y\in B_{1/2}$ and $r\in(0,1/8)$, we have
\begin{equation}\label{eqn.doubling}
\sup_{B_{2r}(y)}b
\leq C(n,k,K_0,r)\sup_{B_r(y)}b.
\end{equation}
\end{lemma}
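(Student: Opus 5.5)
We apply the maximum principle to the Pogorelov-type auxiliary function
\[
\Phi(x,\xi):=(w-u)^2(x)\exp\frb{\frac\tau2|Du(x)|^2}u_{\xi\xi}(x),
\qquad x\in\ol\Om,\ |\xi|=1,
\]
where $\Om$ and $w$ are as in Lemma \ref{lem.comparison-domain} and $\tau=\tau(n,k,K)>0$ is small. Since $w=u$ on $\p\Om$, $\Phi$ vanishes on $\p\Om$ and attains its maximum at some interior $(x_0,\xi_0)$. We may assume $b(x_0)$ is large compared with a constant $C(n,k,K)$. If $x_0\in\ol{B_{1/4}}$, then $w-u\leq v-u\leq C$ and $|Du|\leq K$ give $\Phi(x_0,\xi_0)\leq C\sup_{B_{1/4}}b$. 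Since $w-u\geq\d$ on $B_{1/2}$, this yields $\sup_{B_{1/2}}b\leq C\d^{-2}\sup_{B_{1/4}}b$. The proof therefore reduces to bounding $(w-u)(x_0)b(x_0)\leq C$ when $x_0\notin\ol{B_{1/4}}$. Then $\Phi\leq C$, and so $b\leq C\d^{-2}$ on $B_{1/2}$, which is even better, because $\sup_{B_{1/4}}b\geq c(n,k)>0$: $\l\in\G_k$ with $F=1$ forces $\l_{\max}\geq c$.

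At such an $x_0$ we rotate so that $D^2u(x_0)$ is diagonal with $\l_1=b$ and $\xi_0=e_1$, and apply the standard perturbation argument for $u_{11}$ so that $\log u_{11}$ is smooth. We then compute $L_u\log\Phi\leq0$ with $L_u=F^{ij}(D^2u)\p_{ij}$. The ingredients are as follows.
\begin{enumerate}[(i)]
\item Differentiating $F(D^2u)=1$ twice gives $F^{ij}u_{ij11}=-F^{pq,rs}u_{pq1}u_{rs1}$. Concavity of $F^{1/(k-\ell)}$, as in Lu's or Guan--Sroka-type inequalities used in \cite{W26}, controls $-F^{pq,rs}u_{pq1}u_{rs1}-F^{ii}u_{11i}^2/u_{11}$ from below by $-C\sum F^{ii}u_{11i}^2/u_{11}^2$-type terms. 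These are then absorbed using the first-order condition $u_{11i}/u_{11}=-2(w-u)_i/(w-u)-\tau u_iu_{ii}$.
\item The gradient term gives $L_u\frac\tau2|Du|^2=\tau\sum F^{ii}\l_i^2+\tau u_lF^{ii}u_{iil}$, and the last term vanishes because the right-hand side is constant.
\item The cutoff term gives $2L_u(w-u)/(w-u)-2F^{ii}(w-u)_i^2/(w-u)^2$. Here $L_u(w-u)=L_u(v-u)\geq0$ outside $\op{supp}\zeta$ by concavity of $F^{1/(k-\ell)}$ applied at $u$, since $F(D^2v)=1$ there.
\end{enumerate}
The gradient bound $|D(w-u)|\leq C$ from Lemma \ref{lem.comparison-domain} makes all the bad terms at most $C\sum_iF^{ii}/(w-u)^2$, together with the cross terms from (i).

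The main obstacle is comparing the error $\sum F^{ii}$ with the good term $\sum F^{ii}\l_i^2$ without any growth in $\l_{\max}$. Since $u$ is $k$-convex and $k\geq2$, we have $\l\in\G_2$. This gives $\sum_i F^{ii}\l_i^2\geq c\,b^{-2}\cdot(\text{stuff})$; more precisely, we would prove
\[
\sum_iF^{ii}\leq \frac{C}{b^2}\sum_iF^{ii}\l_i^2,
\]
possibly using Lemma \ref{lem.Fj-lambdaj}: for large $\l_j$ the inequality is immediate, and for indices with $|\l_i|$ small we use $F_1(1+b^2)\geq c\sum F_i$, that is, $F_1\l_1^2\gtrsim\sum F_i$ when $b\geq1$. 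This is the step I expect to be delicate, especially handling the third-order cross terms in (i) with $k-\ell\in\{1,2\}$. With it we obtain
\[
0\geq\frb{\frac\tau2-\frac{C}{((w-u)b)^2}}\sum F^{ii}\l_i^2,
\]
hence $(w-u)(x_0)b(x_0)\leq C$.

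For the second statement \eqref{eqn.doubling}, we rescale. For $y\in B_{1/2}$ and $r<1/8$, set $\tilde u(z):=(4r)^{-2}u(y+4rz)$ on $B_2$. The equation $F(D^2u)=1$ is invariant under this scaling, and $\|\tilde u\|_{C^1(B_2)}\leq C(r)K_0$. Applying the first inequality to $\tilde u$ and rescaling, $B_{1/2}$ maps to $B_{2r}(y)$ and $B_{1/4}$ maps to $B_r(y)$, which gives \eqref{eqn.doubling} with $C=C(n,k,K_0,r)$.
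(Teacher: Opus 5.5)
Your skeleton matches the paper's: the same $\Phi$, the same split according to whether $x_0\in\ol{B_{1/4}}$, the same use of $L_u(w-u)\ge0$ outside $\op{supp}\zeta$, the same target inequality $0\ge\bigl(\frac\tau2-\frac{C}{((w-u)b)^2}\bigr)\sum F^{ii}\l_i^2$, and the same rescaling. The gap is the step you flag as delicate. The inequality $\sum_iF^{ii}\le Cb^{-2}\sum_iF^{ii}\l_i^2$ is false.

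A counterexample already occurs for $F=\s_2$ ($\ell=0$, $k=2$). Take $\l=(b,\e,\dots,\e)$ with $\e\approx\frac1{(n-1)b}$. Then $\sum_iF_i=(n-1)\s_1\approx(n-1)b$, while $\sum_iF_i\l_i^2\approx b$. Lemma~\ref{lem.Fj-lambdaj} with $j=1$ and $b\ge1$ only gives $\sum_iF_i\le\frac2{c_1}F_1\l_1^2\le\frac2{c_1}\sum_iF_i\l_i^2$, with no factor $b^{-2}$. This lack of growth is exactly the obstruction the paper's introduction points out. So once you reduce to bad terms of size $C\sum_iF^{ii}/(w-u)^2$, the maximum principle only yields $(w-u)(x_0)\le C$, which says nothing about $b$.

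The missing idea is to keep the sign of the third-order terms in (i), rather than bounding them below by a negative multiple of $\sum F^{ii}u_{11i}^2/\l_1^2$. Write $\vp:=w-u$ and $\l_1=\dots=\l_m>\l_{m+1}$. Concavity of $F^{1/(k-\ell)}$ together with $F^{ii}u_{ii1}=0$ removes the $F^{ii,jj}$ term, leaving $F^{ii}u_{ii11}\ge2\sum_{i>m}\frac{F^{ii}-F^{11}}{\l_1-\l_i}u_{11i}^2$. Combining this with $\frac{2\l_1}{\l_1-\l_i}\ge\frac74-\frac{4\l_i^2}{\l_1^2}$ gives
\[
\frac{F^{ii}u_{11ii}}{\l_1}-\frac{F^{ii}u_{11i}^2}{\l_1^2}\ge\sum_{i=1}^n\Bigl(\frac34F^{ii}-\frac74F^{11}-\frac4{\l_1^2}F^{ii}\l_i^2\Bigr)\frac{u_{11i}^2}{\l_1^2}.
\]
The positive part $\frac34F^{ii}u_{11i}^2/\l_1^2$ absorbs the cutoff term $-2F^{ii}\vp_i^2/\vp^2$ entirely through the first-order condition, since
\[
\frac34\frac{u_{11i}^2}{\l_1^2}-\frac{2\vp_i^2}{\vp^2}\ge-\frac32\tau^2K^2\l_i^2 .
\]
Hence no $\sum F^{ii}/\vp^2$ term ever appears.

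The remaining errors are weighted either by $F^{11}\le\l_1^{-2}\sum F^{ii}\l_i^2$ or by an explicit factor $\l_1^{-2}F^{ii}\l_i^2$. Bound them using $u_{11i}^2/\l_1^2\le8\vp_i^2/\vp^2+2\tau^2K^2\l_i^2$ and $\sum\l_i^2\le n^2\l_1^2$ (from $\G_k\subset\G_2$). They are then at most $\bigl(C|D\vp|^2/(\vp\l_1)^2+C\tau^2K^2\bigr)\sum F^{ii}\l_i^2$. This is where the factor $b^{-2}$ in your target inequality actually comes from. A minor point: evaluating at the fixed direction $e_1$, so that $\Psi=\log\Phi(\cdot,e_1)$ is smooth, makes your perturbation of $u_{11}$ unnecessary.
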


\begin{proof}
Set $\vp:=w-u$ in $\Om$.
Consider the auxiliary function
(similar to those in \cite[(7.4)]{QY26b} and \cite[page 10]{LW26b})
\[
\Phi(x,\xi):=\vp^2(x)
\exp\frb{\frac\tau2|Du(x)|^2}u_{\xi\xi}(x)
\quad\text{for any}\ (x,\xi)\in\ol\Om\times\mathbb S^{n-1},
\]
where $\tau\in(0,1]$ will be chosen later.
Since $\Phi=0$ on $\p\Om\times\mathbb S^{n-1}$,
$\Phi$ attains a positive maximum
at some $(x_0,\xi_0)\in\Om\times\mathbb S^{n-1}$.
Next, we distinguish two cases.

\textit{Case 1. $x_0\in\ol{B_{1/4}}$.}
Since $w-u\leq2K_0$ and $|Du|\leq K$,
\begin{equation}\label{eqn.inner-maximum}
\Phi(x_0,\xi_0)\leq C(n,k,K)\sup_{B_{1/4}}b.
\end{equation}

\textit{Case 2. $x_0\notin\ol{B_{1/4}}$.}
Since $\op{supp}\zeta\Subset B_{1/4}$, we have $\zeta(x_0)=0$.
The concavity of $F^{1/(k-\ell)}$ gives
\begin{align}
L_u\vp
=L_u(w-u)
&=L_u(v-u)
=F^{ij}(D^2u)(v-u)_{ij}
\nonumber\\
&\geq(k-\ell)\frb{F(D^2u)}^{1-\frac1{k-\ell}}
\frb{F^{\frac1{k-\ell}}(D^2v)-F^{\frac1{k-\ell}}(D^2u)}
\nonumber\\
&=-\frac{k-\ell}{4}\zeta(x_0)
=0.
\label{eqn.Luvp>=0}
\end{align}
After a rotation of coordinates,
we may assume that $\xi_0=e_1$ and
\[
D^2u(x_0)
=\op{diag}(\l_1,\l_2,\ldots,\l_n),
\quad
\l_1=\l_2=\cdots=\l_m>\l_{m+1}\geq\cdots\geq\l_n.
\]
All subsequent quantities are evaluated at $x_0$.
Since $u_{11}(x_0)>0$, the smooth function
\[
\Psi(x):=\log\Phi(x,e_1)
=2\log\vp+\frac\tau2|Du|^2+\log u_{11}
\]
attains a local maximum at $x_0$.
Thus
\begin{align}
0
&=\Psi_i
=2\frac{\vp_i}{\vp}+\tau u_i \l_i+\frac{u_{11i}}{\l_1}
\quad
\text{for all}\ i\in\Z_{[1,n]},
\label{eqn.Pogorelov-first}
\\
0
\geq F^{ij}\Psi_{ij}
&=\frac{2F^{ii}\vp_{ii}}{\vp}
-\frac{2F^{ii}\vp_i^2}{\vp^2}
+\tau u_kF^{ii}u_{kii}
+\tau F^{ii}\l_i^2
+\frac{F^{ii}u_{11ii}}{\l_1}
-\frac{F^{ii}u_{11i}^2}{\l_1^2}
\nonumber\\
&\geq
-\frac{2F^{ii}\vp_i^2}{\vp^2}
+\tau F^{ii}\l_i^2
+\frac{F^{ii}u_{11ii}}{\l_1}
-\frac{F^{ii}u_{11i}^2}{\l_1^2},
\label{eqn.Pogorelov-second}
\end{align}
where the last inequality follows from \eqref{eqn.Luvp>=0}
and $F^{ii}u_{iil}=\p_{x_l}F=0$ for all $l\in\Z_{[1,n]}$.
Differentiating the equation $F(D^2u)=1$ twice in the $x_1$-direction
and using the concavity of $F^{1/(k-\ell)}$,
we obtain
\begin{align}
F^{ii}u_{ii11}
=-F^{ij,rs}u_{ij1}u_{rs1}
&=-F^{ii,jj}u_{ii1}u_{jj1}
+2\sum_{i<j}\frac{F^{jj}-F^{ii}}{\l_i-\l_j}u_{ij1}^2
\geq2\sum_{i>m}\frac{F^{ii}-F^{11}}{\l_1-\l_i}u_{11i}^2.
\label{eqn.Fii-uii11}
\end{align}
For any $i>m$, since $\l_1>0$ and $\l_1-\l_i>0$, we have
\begin{equation}\label{eqn.lambda-ratio}
\frac{2\l_1}{\l_1-\l_i}
\geq2+\frac{2\l_i}{\l_1}
\geq\frac{7}{4}-\frac{4\l_i^2}{\l_1^2}.
\end{equation}
Thus, using $F^{ii}\geq F^{11}>0$ for all $i>m$
and $F^{11}=\frac{F^{ii}\l_i^2}{\l_1^2}$ for all $i\leq m$,
we have
\begin{align*}
2\sum_{i>m}\frac{F^{ii}-F^{11}}{\l_1(\l_1-\l_i)}u_{11i}^2
-\sum_{i=1}^nF^{ii}\frac{u_{11i}^2}{\l_1^2}
\geq\sum_{i=1}^n\frb{\frac{3}{4}F^{ii}-\frac{7}{4}F^{11}-\frac{4}{\l_1^2}F^{ii}\l_i^2}
\frac{u_{11i}^2}{\l_1^2}.
\end{align*}
Substituting this inequality and \eqref{eqn.Fii-uii11} into \eqref{eqn.Pogorelov-second},
we obtain
\begin{align}
0
\geq-\frac{2F^{ii}\vp_i^2}{\vp^2}
+\tau F^{ii}\l_i^2
+\frac{3}{4}F^{ii}\frac{u_{11i}^2}{\l_1^2}
-\frac{7}{4}F^{11}\sum_{i=1}^n\frac{u_{11i}^2}{\l_1^2}
-\frac{4}{\l_1^2}F^{ii}\l_i^2\frac{u_{11i}^2}{\l_1^2}.
\label{eqn.Pogorelov-third}
\end{align}
By the equality \eqref{eqn.Pogorelov-first},
for any $i\in\Z_{[1,n]}$,
\begin{gather*}
\frac{3}{4}\frac{u_{11i}^2}{\l_1^2}-2\frac{\vp_i^2}{\vp^2}
=\frac{3}{4}\frac{u_{11i}^2}{\l_1^2}-\frac{1}{2}\frb{\tau u_i \l_i+\frac{u_{11i}}{\l_1}}^2
=\frac{1}{4}\frb{\frac{u_{11i}}{\l_1}-2\tau u_i\l_i}^2
-\frac{3\tau^2}{2}u_i^2\l_i^2
\geq-\frac{3\tau^2K^2}{2}\l_i^2,
\\
\frac{u_{11i}^2}{\l_1^2}
=\frb{2\frac{\vp_i}{\vp}+\tau u_i \l_i}^2
\leq\frac{8\vp_i^2}{\vp^2}+2\tau^2K^2\l_i^2.
\end{gather*}
Moreover, since $\l(D^2u)\in\G_k\subset\G_2$,
\[
\sum_{i=1}^n\l_i^2=(\t u)^2-2\s_2(D^2u)<(\t u)^2\leq n^2\l_1^2.
\]
Therefore
\begin{align*}
\frac{7}{4}F^{11}\sum_{i=1}^n\frac{u_{11i}^2}{\l_1^2}
&\leq\frb{\frac{14|D\vp|^2}{\vp^2}+\frac{7}{2}n^2\tau^2K^2\l_1^2}F^{11}
\leq \frb{\frac{14|D\vp|^2}{(\vp\l_1)^2}+\frac{7}{2}n^2\tau^2K^2}F^{ii}\l_i^2,
\\
\frac{4}{\l_1^2}F^{ii}\l_i^2\frac{u_{11i}^2}{\l_1^2}
&\leq\frb{\frac{32|D\vp|^2}{(\vp\l_1)^2}+8n^2\tau^2K^2}F^{ii}\l_i^2.
\end{align*}
Combining these estimates with \eqref{eqn.Pogorelov-third}
and choosing $\tau:=\frac{1}{(3+23n^2)(1+K^2)}$,
we obtain
\[
0\geq\frb{\tau-\frb{\frac{3+23n^2}{2}}\tau^2K^2
-\frac{46|D\vp|^2}{(\vp\l_1)^2}}
\sum_{i=1}^nF^{ii}\l_i^2
\geq\frb{\frac{\tau}{2}-\frac{46|D\vp|^2}{(\vp\l_1)^2}}
\sum_{i=1}^nF^{ii}\l_i^2.
\]
Consequently, using the gradient bound on $\Om$ in Lemma \ref{lem.comparison-domain},
we obtain
\[
\vp(x_0)\l_1(x_0)\leq\sqrt{\frac{92}{\tau}}|D\vp(x_0)|
\leq C(n,k,K),
\]
and thus
\[
\Phi(x_0,\xi_0)
=\vp^2(x_0)\exp\frb{\frac\tau2|Du(x_0)|^2}\l_1(x_0)
\leq C(n,k,K_0).
\]
Combining this with \eqref{eqn.inner-maximum}
and $\vp=w-u\geq\d$ in $B_{1/2}$ by Lemma \ref{lem.comparison-domain},
we obtain
\begin{equation}\label{eqn.b1/2<b1/4}
\sup_{B_{1/2}}b
\leq\d^{-2}\max_{\ol\Om\times\mathbb S^{n-1}}\Phi
=\d^{-2}\Phi(x_0,\xi_0)
\leq C(n,k,K)\frb{1+\sup_{B_{1/4}}b}
\leq C(n,k,K)\sup_{B_{1/4}}b,
\end{equation}
where the last inequality follows from $1=F(\l)\leq F(b\mathds 1)=b^{k-\ell} F(\mathds1)$.

Finally, for any $y\in B_{1/2}$ and $r\in(0,1/8)$, set
\[
\wt u(x)
:=\frac{u(y+4rx)-u(y)}{(4r)^2}
\quad\text{for any}\ x\in B_2.
\]
Then $D^2\wt u(x)=D^2u(y+4rx)$, $F(D^2\wt u)=1$ in $B_2$,
and $\norm{\wt u}_{C^1(B_2)}\leq K_0/(2r)$.
Applying the inequality \eqref{eqn.b1/2<b1/4} to $\wt u$,
we obtain
\[
\sup_{B_{2r}(y)}b
=\sup_{B_{1/2}}\l_{\max}(D^2\wt u)
\leq
C(n,k,K,r)\sup_{B_{1/4}}\l_{\max}(D^2\wt u)
=C(n,k,K_0,r)\sup_{B_r(y)}b.
\]
This completes the proof.
\end{proof}

\subsection{Proof of Theorem \ref{thm.Hessian-estimate}}\label{sec.main-proof}

The remaining argument is the same as that in \cite[Section  5]{W26}.
For completeness, we give a brief proof.
By rescaling $u(x)$ to $4u(x/2)$,
we prove that $|D^2u(0)|$ is controlled by $\|u\|_{C^1(B_2)}$.
Suppose to the contrary that
there exists a sequence of $k$-convex solutions $u_j\in C^\infty(B_2)$
of equation \eqref{eqn.sigma-quotient}
such that $\norm{u_j}_{C^1(B_2)}\leq A$ and $\abs{D^2u_j(0)}\to+\infty$.
By the Arzel\`a--Ascoli theorem
and the closedness of viscosity solutions,
after passing to a subsequence,
$u_j$ converges uniformly to
a $k$-convex viscosity solution $u$ of $F(D^2u)=1$ in $B_2$.
By the Alexandrov-type differentiability result in \cite[Lemma 5.1]{W26}
(or \cite[Proposition 4.1]{SY25} when $\ell=0$),
there exist $y\in B_{1/8}$ and a quadratic polynomial $P$ such that
\[
u(y+z)=P(y+z)+o(|z|^2),
\quad
\l(D^2P)\in\G_k,
\quad
F(D^2P)=1.
\]
The operator $M\mapsto F(D^2P+M)-1$,
extended as in \cite[Section 5]{W26},
is degenerate elliptic, vanishes at $0$,
and is smooth and uniformly elliptic near $0$.
The second-order expansion of $u$
and the locally uniform convergence of $u_j$
give the required $L^\infty(B_1)$ smallness of
$\rho^{-2}\frb{u_j(y+\rho z)-P(y+\rho z)}$
for a sufficiently small $\rho\in(0,1/8)$
and all sufficiently large $j$.
Savin's small perturbation theorem \cite{Sav07}
therefore gives
\[
\sup_{B_{\rho/2}(y)}\l_{\max}(D^2u_j)\leq C_0
\]
for all sufficiently large $j$.
Finally, choose points $y_0:=y$, $y_1,y_2,\dots,y_N:=0$
on the line segment from $y$ to $0$ such that
$\abs{y_i-y_{i-1}}\leq\rho/2$ for all $i\in\Z_{[1,N]}$, $N\leq1+1/(4\rho)$.
Then $y_i\in B_{1/8}$ and $B_{\rho/2}(y_i)\subset B_\rho(y_{i-1})$.
Repeated application of the doubling inequality (Lemma \ref{lem.doubling})
with centers $y_{i-1}$ and radius $\rho/2$ gives
\begin{align*}
\l_{\max}(D^2u_j(0))
\leq\sup_{B_{\rho/2}(y_N)}\l_{\max}(D^2u_j)
\leq C\frb{n,k,A,\rho}^N\sup_{B_{\rho/2}(y)}\l_{\max}(D^2u_j)
\leq C\frb{n,k,A,\rho}C_0
\end{align*}
for all sufficiently large $j$.
Since $\G_k\subseteq\G_2$ implies
$\abs{D^2u_j}\leq n\l_{\max}(D^2u_j)$,
this contradicts $\abs{D^2u_j(0)}\to+\infty$.
This completes the proof.

\section{Interior Hessian estimates for a class of Hessian equations}\label{sec.general-Hessian}

\begin{proposition}\label{prop.general-Hessian}
Let $n\geq2$,
and let $\G\subset\R^n$ be an open symmetric convex cone with vertex at the origin
satisfying $\G_n\subset\G$.
Assume that $f\in C^2(\G)\cap C^0(\ol\G)$ is a symmetric function
and satisfies the following conditions:
\begin{gather}
f>0\ \text{in}\ \G,
\quad
f=0\ \text{on}\ \p\G,
\label{eqn.nonnegative}
\\
f_i:=\frac{\p f}{\p\l_i}>0\quad\text{in}\ \G,\
\label{eqn.elliptic}
\\
f\ \text{is a concave function},
\label{eqn.sym-concave}
\\
f\ \text{is homogeneous of degree one}.
\label{eqn.homogeneous}
\end{gather}
Suppose also that there exists $c>0$ such that,
for every $\l\in\G$ with $f(\l)=1$,
\begin{gather}
f_i(\l)\geq c\sum_{j=1}^nf_j(\l)
\quad\text{whenever}\ \l_i<0,
\label{eqn.general-negative}
\\
f_i(\l)\frb{1+\l_i^2}\geq c\sum_{j=1}^nf_j(\l)
\quad\text{for all}\ i\in\Z_{[1,n]}.
\label{eqn.general-coefficient}
\end{gather}
If $u\in C^4(B_2)$ is a solution of $F(D^2u):=f(\l(D^2u))=1$ with $\l(D^2u)\in\G$ in $B_2$,
then
\[
\sup_{B_{1/2}}|D^2u|
\leq C\frb{n,f,\G,c,\|u\|_{L^\infty(B_2)}}.
\]
\end{proposition}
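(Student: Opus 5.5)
The plan is to repeat Section \ref{sec.doubling} verbatim, with $F=\s_k/\s_\ell$ replaced by the degree-one operator $f$. Two consequences of the earlier argument must be kept track of: powers $(k-\ell)$ become $1$, and Lemma \ref{lem.Fj-lambdaj} is replaced by the hypotheses \eqref{eqn.general-negative}--\eqref{eqn.general-coefficient}. Those hypotheses transfer to the band $3/4\le f\le1$ by exactly the rescaling $\mu=\l/f(\l)$ used in the proof of Lemma \ref{lem.Fj-lambdaj}.

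\textbf{Step 1: comparison solution and gradient bounds.} First I need a gradient bound for $u$ in $B_{3/2}$ in terms of $\|u\|_{L^\infty(B_2)}$. Since $\G\subset\G_1$, $u$ is subharmonic. For general $f$ I would use a Guan--Qiu/Chou--Wang type gradient estimate valid for concave, homogeneous operators on cones containing $\G_n$. Failing that, I would assume the $C^1$ norm and state the result with $\|u\|_{C^1}$.

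Next, solve $f(\l(D^2v))=1-\zeta/4$ in $B_{3/2}$ with $v=u$ on $\p B_{3/2}$. I would use the Caffarelli--Nirenberg--Spruck/Trudinger theory on balls for general concave $f$ with \eqref{eqn.nonnegative}--\eqref{eqn.homogeneous}, smoothing the boundary data if needed. Then:
\begin{enumerate}[(i)]
\item comparison gives $v\ge u$;
\item subharmonicity gives $\|v\|_{L^\infty}\le K_0$;
\item a gradient bound follows as before.
\end{enumerate}

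\textbf{Step 2: seed, propagation, separation.} The divergence-free Newton-tensor proof of Lemma \ref{lem.seed-F} is unavailable, so I would use the compactness argument of Remark \ref{rem.seed-compactness}. This needs an Alexandrov-type twice-differentiability point for viscosity solutions of $f=1$ in $\G$. I expect this to follow from the argument of \cite[Lemma 5.1]{W26}, using only the semiconvexity-type consequence of $\G\subset\G_1$ plus ellipticity and concavity. For the barrier of Lemma \ref{lem.barrier}, concavity and homogeneity give $\sum f_i\ge f(\mathds1)>0$, and the two coefficient conditions feed Case 1 and Case 2 unchanged. Lemmas \ref{lem.propagation}, \ref{lem.separation}, \ref{lem.boundary-decay} and \ref{lem.comparison-domain} then go through verbatim, since they use only concavity, ellipticity and subharmonicity.

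\textbf{Step 3: doubling inequality.} Run the Pogorelov computation of Lemma \ref{lem.doubling} with $\Phi=\vp^2e^{\tau|Du|^2/2}u_{\xi\xi}$. The ingredients are:
\begin{itemize}
\item concavity of $f$, which gives $L_u(v-u)\ge0$ off $\op{supp}\zeta$;
\item $-F^{ij,rs}u_{ij1}u_{rs1}\ge0$ together with the standard off-diagonal term;
\item the bound $\sum\l_i^2\le n^2\l_1^2$.
\end{itemize}

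The third point is the one place $\G_k\subset\G_2$ was used, and I expect it to be the main obstacle. For a general $\G$ only $\G\subset\G_1$ holds, so negative eigenvalues are bounded only by $(n-1)\l_1$. That still suffices: $|\l_i|\le(n-1)\l_1$ for all $i$, hence $\sum\l_i^2\le n^3\l_1^2$, and only constants change. The final step also used $1=F(\l)\le F(b\mathds1)$, i.e.\ $b\ge1/f(\mathds1)$, which holds by monotonicity and homogeneity. The same bound $|D^2u|\le n\l_{\max}$ closes the compactness argument.

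\textbf{Step 4: compactness conclusion.} Finally, I would repeat Section \ref{sec.main-proof}:
\begin{enumerate}[(i)]
\item take limits of a blow-up sequence;
\item find an Alexandrov point;
\item apply Savin's theorem, which requires $f$ smooth and uniformly elliptic near $\l(D^2P)\in\G$ (given by $f\in C^2$ and $f_i>0$);
\item chain the doubling inequality along a segment.
\end{enumerate}
A covering argument then gives the estimate on $B_{1/2}$.
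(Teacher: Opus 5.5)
Your architecture is the paper's: solve $f(\l(D^2v))=1-\zeta/4$ via \cite{Tru95}, get the seed by the compactness argument of Remark \ref{rem.seed-compactness}, transfer \eqref{eqn.general-negative}--\eqref{eqn.general-coefficient} by homogeneity for the barrier, then rerun the propagation, the doubling computation and the Savin-plus-chaining argument. The one step that does not hold up as written is Step~1. This is the reduction from $\|u\|_{L^\infty(B_2)}$ to $\|u\|_{C^1(B_2)}$, together with the bound on $Dv$ that feeds the barrier and Lemma \ref{lem.comparison-domain}.

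For a general concave, homogeneous $f$ on a cone containing $\G_n$, you have no interior gradient estimate to quote without extra structure. In the Chou--Wang and Li-type arguments you allude to, the key input is a lower bound $f_i\geq c\sum_jf_j$ at a negative eigenvalue $\l_i$. That is exactly hypothesis \eqref{eqn.general-negative}, which you only use in Case~2 of the barrier. The paper first notes that, since $f_i$ is homogeneous of degree zero, \eqref{eqn.general-negative} holds on all of $\G$. Together with \eqref{eqn.nonnegative}--\eqref{eqn.homogeneous}, this lets it run the interior gradient argument of \cite[Theorem 1.1]{D20} (cf.\ \cite{L91}) for both $u$ and $v$. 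Your fallback of assuming the $C^1$ norm would prove a strictly weaker statement than the proposition.

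Separately, the obstacle you flag in Step~3 is not one. Take $\l\in\G_1$ with $\l_1=\l_{\max}>0$, and maximize $\sum\l_i^2$ over the polytope $\{\l_i\leq\l_1,\ \sum_i\l_i\geq0\}$; the maximum is at a vertex, giving $\sum\l_i^2\leq n(n-1)\l_1^2<n^2\l_1^2$. So the paper's constants and $|D^2u|\leq n\l_{\max}(D^2u)$ carry over unchanged.
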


\begin{remark}
The assumptions on $\G$
and conditions \eqref{eqn.nonnegative}--\eqref{eqn.sym-concave}
are standard in the Dirichlet solvability theory of Hessian equations and curvature equations;
see \cite{CNS85,Tru90,Tru95}.
Homogeneity \eqref{eqn.homogeneous}
provides the radial growth required for solvability,
gives Euler's identity,
and allows conditions \eqref{eqn.general-negative} and \eqref{eqn.general-coefficient} on $\{f=1\}$
to be extended by rescaling to $\{3/4\leq f\leq1\}$.
The condition \eqref{eqn.general-negative}
is a standard hypothesis used in gradient estimates;
see \cite{L91,SUW04,D20}.
The condition \eqref{eqn.general-coefficient}
supplies the additional control needed for the propagation barrier.
\end{remark}

\begin{remark}
For $f=\s_k^{1/k}$ with $3\leq k\leq n$,
or $f=(\s_k/\s_\ell)^{1/(k-\ell)}$
with $1\leq\ell<k\leq n$ and $k-\ell\geq3$,
all the assumptions of Proposition \ref{prop.general-Hessian} hold on $\G=\G_k$
except for inequality \eqref{eqn.general-coefficient}.
\end{remark}

\begin{proof}[Proof of Proposition \ref{prop.general-Hessian}]
By homogeneity, \eqref{eqn.general-negative} holds throughout $\G$,
and \eqref{eqn.general-coefficient} holds on $3/4\leq f\leq1$
after decreasing $c$.
Furthermore, conditions \eqref{eqn.nonnegative}--\eqref{eqn.homogeneous}
and \eqref{eqn.general-negative}
allow us to apply the interior gradient argument in
\cite[proof of Theorem  1.1]{D20} to $u$.
Thus, by translation and scaling,
it suffices to prove the corresponding estimate in terms of $\norm{u}_{C^1(B_2)}$.

Set $K_0:=1+\norm{u}_{C^1(B_2)}$
and $L_u:=F^{ij}(D^2u)\p_{ij}$,
with $L_v$ defined similarly.
By conditions \eqref{eqn.nonnegative}--\eqref{eqn.homogeneous} and $\G_n\subset\G$,
the solvability theorem \cite[Theorem  1.2]{Tru95} gives an admissible solution
$v\in C^{3,\a}(\ol{B_{3/2}})$, for some $\a\in(0,1)$, of
\[
F(D^2v)=1-\zeta/4\quad\text{in}\ B_{3/2},
\quad v=u\quad\text{on}\ \p B_{3/2},
\]
where $\zeta$ is the fixed cutoff of Section  3.
By convexity and symmetry of $\G$,
together with $\G_n\subset\G$ and $0\in\p\G$,
we have $\G\subset\G_1$.
The comparison principle and $\t v>0$
give $u\leq v$ in $B_{3/2}$.
Applying the same gradient argument to $v$,
we choose $K=K(n,f,\G,c,K_0)\geq1$ such that
\begin{equation}\label{eqn.gradient-F-f}
\norm{Dv}_{L^\infty(B_1)}
+\norm{u}_{C^1(B_2)}
+\norm{\zeta}_{C^1(B_{3/2})}\leq K.
\end{equation}
For admissible solutions of $F(D^2u_j)=1$,
the inclusion $\G\subset\G_1$
gives Alexandrov-type differentiability of locally uniform limits,
as in \cite[proof of Lemma  5.1]{W26}.
Together with the closedness of viscosity solutions,
this yields the seed separation in Lemma  \ref{lem.seed-F}
by the compactness argument in Remark \ref{rem.seed-compactness}.

Next, Euler's identity and concavity imply
\[
\sum_{i=1}^n f_i(\l)\l_i=f(\l),
\quad
\sum_{i=1}^n f_i(\l)\geq f(\mathds1)>0,
\quad
L_v(v-u)\leq-\zeta/4\leq L_u(v-u).
\]
Using these inequalities,
the seed separation,
the inequalities \eqref{eqn.general-coefficient}, \eqref{eqn.general-negative} and $\t v>0$,
we can repeat the propagation and comparison-domain arguments
in Subsections  \ref{subsec.barrier}--\ref{subsec.comparison}.
Furthermore,
the doubling inequality in Lemma  \ref{lem.doubling}
follows from the same calculation,
using symmetry, ellipticity and concavity, $L_u(v-u)\geq0$ outside $\op{supp}\zeta$,
$|D^2u|\leq n\l_{\max}(D^2u)$, and the inequality
\[
1=F(D^2u)
\leq f\frb{\l_{\max}(D^2u)\mathds1}
=\l_{\max}(D^2u)f(\mathds1).
\]

Finally, using the differentiability above,
Savin's theorem \cite[Theorem  1.3]{Sav07},
and the doubling inequality,
we complete the compactness argument
as in the proof of Theorem  \ref{thm.Hessian-estimate}.
\end{proof}

\subsection*{Acknowledgments}

This work was partially supported by NSFC 12171389 and NSFC 11801015.

\renewcommand\refname{References}


\begin{thebibliography}{W26}

\bibitem[C15]{C15}
C.-Q. Chen,
The interior gradient estimate of Hessian quotient equations,
J. Differential Equations, 259 (2015), 1014--1023.

\bibitem[CHO16]{CHO16}
C.-Q. Chen, F. Han, and Q.-Z. Ou,
The interior $C^2$ estimate for the Monge-Amp\`ere equation in dimension $n=2$,
Anal. PDE, 9 (2016), 1419--1432.


\bibitem[CNS85]{CNS85}
L. Caffarelli, L. Nirenberg, and J. Spruck,
The Dirichlet problem for nonlinear second-order elliptic equations.
III. Functions of the eigenvalues of the Hessian,
Acta Math., 155 (1985), 261--301.

\bibitem[CT05]{CT05}
N. Chaudhuri and N. S. Trudinger,
An Alexandrov type theorem for $k$-convex functions,
Bull. Austral. Math. Soc., 71 (2005), 305--314.

\bibitem[CW01]{CW01}
K.-S. Chou and X.-J. Wang,
A variational theory of the Hessian equation,
Comm. Pure Appl. Math., 54 no. 9 (2001), 1029--1064.

\bibitem[CWY09]{CWY09}
J.-Y. Chen, M. Warren, and Y. Yuan,
A priori estimate for convex solutions to special Lagrangian equations and its application,
Comm. Pure Appl. Math., 62 (2009), 583--595.

\bibitem[CY10]{CY10}
S.-Y. A. Chang and Y. Yuan,
A Liouville problem for the Sigma-2 equation,
Discrete Contin. Dyn. Syst., 28 no. 2 (2010), 659--664.

\bibitem[CZZ26]{CZZ26}
R.-S. Chen, X.-C. Zhou, and R.-X. Zhu,
Interior $C^{2,\a}$ regularity for the quadratic Hessian equation,
arXiv:2608.29484 (2026).


\bibitem[D20]{D20}
W.-S. Dong,
Interior and boundary gradient estimates for solutions to Hessian equations
satisfying Neumann boundary conditions,
Nonlinear Anal., 195 (2020), 111733.

\bibitem[DZ26]{DZ26}
W.-S. Dong and R.-J. Zhang,
Interior Hessian estimates for Hessian quotient equations,
arXiv:2608.19087v2 (2026).

\bibitem[Fan26]{Fan26}
Z.-Y. Fan,
Hessian estimates for the sigma-$2$ equation
with variable right-hand side terms in dimension $4$,
Adv. Math., 494 (2026), Paper No. 110953.

\bibitem[F26a]{F26a}
C. Y. Fung,
Doubling argument of the Hessian estimate for the Hessian quotient equations,
arXiv:2607.21982 (2026).


\bibitem[GQ19]{GQ19}
P.-F. Guan and G.-H. Qiu,
Interior $C^2$ regularity of convex solutions
to prescribing scalar curvature equations,
Duke Math. J., 168 (2019), 1641--1663.

\bibitem[GS26]{GS26}
P.-F. Guan and M. Sroka,
A special concavity property for positive Hessian quotient operators,
Discrete Contin. Dyn. Syst., 54 (2026), 50--60.

\bibitem[H59]{H59}
E. Heinz,
On elliptic Monge--Amp\`ere equations and Weyl's embedding problem,
J. Analyse Math., 7 (1959), 1--52.

\bibitem[HM26]{HM26}
F. Han, Y. Ma,
Interior curvature estimates for graphical curvature quotient equations,
arXiv:2609.39276 (2026).


\bibitem[JS26]{JS26}
H.-M. Jiao and Z.-A. Sui,
Interior Hessian estimates for Hessian quotient equations in dimension three,
arXiv:2602.14064 (2026).

\bibitem[K87]{K87}
N. J. Korevaar,
A priori interior gradient bounds for solutions to elliptic Weingarten equations,
Ann. Inst. H. Poincar\'e Anal. Non Lin\'eaire, 4 no. 5 (1987), 405--421.

\bibitem[L91]{L91}
Y.-Y. Li,
Interior gradient estimates for solutions of certain fully nonlinear elliptic equations,
J. Differential Equations, 90 (1991), 172--185.

\bibitem[Liu21]{Liu21}
J.-K. Liu,
Interior $C^2$ estimate for Monge--Amp\`ere equation in dimension two,
Proc. Amer. Math. Soc., 149 (2021), 2479--2486.


\bibitem[Lu23]{Lu23}
S.-Y. Lu,
Interior $C^2$ estimate for Hessian quotient equation in dimension three,
arXiv:2311.05835 (2023).

\bibitem[Lu25]{Lu25}
S.-Y. Lu,
Interior $C^2$ estimate for Hessian quotient equation in general dimension,
Ann. PDE, 11 (2025), Paper No. 17, 26 pp.


\bibitem[LT26a]{LT26a}
S.-Y. Lu and Y.-L. Tsai,
A note on interior $C^2$ estimate for general Hessian quotient equation,
Commun. Pure Appl. Anal., 33 (2026), 88--100.

\bibitem[LT26b]{LT26b}
S.-Y. Lu and Y.-L. Tsai,
Pogorelov type interior $C^2$ estimate for Hessian quotient equation and its application,
J. reine angew. Math., 831 (2026), 155--184.

\bibitem[LW26a]{LW26a}
Z.-S. Li and K. Wu,
A concavity inequality and interior $C^2$ estimate for Hessian quotient equations,
arXiv:2608.17405 (2026).

\bibitem[LW26b]{LW26b}
Z.-S. Li and K. Wu,
Interior Hessian estimates for the quadratic Hessian equation,
arXiv:2608.23233v2 (2026).

\bibitem[M21]{M21}
C. Mooney,
Strict $2$-convexity of convex solutions
to the quadratic Hessian equation,
Proc. Amer. Math. Soc., 149 (2021), 2473--2477.

\bibitem[M25]{M25}
C. Mooney,
Remarks on the quadratic Hessian equation,
arXiv:2505.14586 (2025).

\bibitem[MSY19]{MSY19}
M. McGonagle, C. Song, and Y. Yuan,
Hessian estimates for convex solutions to quadratic Hessian equation,
Ann. Inst. H. Poincar\'e C Anal. Non Lin\'eaire,
36 (2019), 451--454.

\bibitem[MY26]{MY26}
X.-Q. Mei and J. Yan,
Interior $C^2$ estimate for semi-convex solutions to a class of
Hessian quotient equations in arbitrary dimensions,
arXiv:2604.23349 (2026).

\bibitem[P78]{P78}
A. V. Pogorelov,
The Minkowski multidimensional problem,
Halsted Press, New York--Toronto--London, 1978.

\bibitem[Q24]{Q24}
G.-H. Qiu,
Interior Hessian estimates for $\s_2$ equations in dimension three,
Front. Math., 19 (2024), 577--598.

\bibitem[QY26a]{QY26a}
G.-H. Qiu and J. Yan,
Interior curvature estimates for the graphical scalar curvature equation in all dimensions,
arXiv:2609.02581v2 (2026).

\bibitem[QY26b]{QY26b}
G.-H. Qiu and J. Yan,
Interior estimates for Hessian quotient equations,
arXiv:2609.24549 (2026).


\bibitem[Sav07]{Sav07}
O. Savin,
Small perturbation solutions for elliptic equations,
Comm. Partial Differential Equations, 32 (2007), 557--578.

\bibitem[S26]{S26}
R. Shankar,
Hessian estimates for the special Lagrangian equation by doubling,
Anal. PDE, 19 (2026), 339--352.

\bibitem[SUW04]{SUW04}
W.-M. Sheng, J. Urbas, and X.-J. Wang,
Interior curvature bounds for a class of curvature equations,
Duke Math. J., 123 no. 2 (2004), 235--264.

\bibitem[SY20]{SY20}
R. Shankar and Y. Yuan,
Hessian estimate for semiconvex solutions to the sigma-$2$ equation,
Calc. Var. Partial Differential Equations,
59 (2020), Paper No. 30, 12 pp.


\bibitem[SY25]{SY25}
R. Shankar and Y. Yuan,
Hessian estimates for the sigma-$2$ equation in dimension four,
Ann. of Math., 201 (2025), 489--513.

\bibitem[T26]{T26}
Y.-L. Tsai,
A concavity inequality for Hessian quotient equations,
arXiv:2608.16383 (2026).

\bibitem[Tru90]{Tru90}
N. S. Trudinger,
The Dirichlet problem for the prescribed curvature equations,
Arch. Rational Mech. Anal., 111 (1990), 153--179.

\bibitem[Tru95]{Tru95}
N. S. Trudinger,
On the Dirichlet problem for Hessian equations,
Acta Math., 175 (1995), 151--164.


\bibitem[U90]{U90}
J. I. E. Urbas,
On the existence of nonclassical solutions
for two classes of fully nonlinear elliptic equations,
Indiana Univ. Math. J., 39 (1990), 355--382.

\bibitem[WY09]{WY09}
M. Warren and Y. Yuan,
Hessian estimates for the sigma-$2$ equation in dimension $3$,
Comm. Pure Appl. Math., 62 (2009), 305--321.

\bibitem[WY14]{WY14}
D.-K. Wang and Y. Yuan,
Hessian estimates for special Lagrangian equations
with critical and supercritical phases in general dimensions,
Amer. J. Math., 136 (2014), 481--499.

\bibitem[W26]{W26}
K. Wu,
Interior Hessian estimates for $k$-convex solutions
of Hessian quotient equations with $2k>n$,
arXiv:2609.23006v3 (2026).



\bibitem[Z24]{Z24}
X.-C. Zhou,
Notes on generalized special Lagrangian equation,
Calc. Var. Partial Differential Equations, 63 (2024), Paper No. 197, 28 pp.

\end{thebibliography}
\end{document}